\documentclass[oupthm,crop,info]{Template1}%

\usepackage{graphicx}
\usepackage{multicol,multirow}
\usepackage{amsmath,amssymb,amsfonts}
\usepackage{mathrsfs}
\usepackage{amsthm}
\usepackage{rotating}
\usepackage{appendix}
\usepackage[numbers]{natbib}
\usepackage{cleveref}
\usepackage{mathtools}

\theoremstyle{oupplain}
\newtheorem{theorem}{Theorem}[section]
\newtheorem{lemma}[theorem]{Lemma}
\newtheorem{corollary}[theorem]{Corollary}
\theoremstyle{oupdefinition}
\newtheorem{definition}{Definition}[section]
\theoremstyle{oupremark}
\newtheorem{remark}[theorem]{Remark}

\theoremstyle{oupproof}
\newtheorem{proof}{Proof}
\theoremstyle{oupplain}
\newtheorem{conjecture}[theorem]{Conjecture}
\theoremstyle{oupplain}
\newtheorem{assumption}{Assumption}
\theoremstyle{oupplain}
\newtheorem{proposition}[theorem]{Proposition}

\Crefname{conjecture}{Conjecture}{Conjectures}
\crefname{assumption}{assumption}{Assumptions}

\DeclareMathOperator{\meas}{meas}

\numberwithin{equation}{section}

\articletype{RESEARCH ARTICLE}

\begin{document}

\begin{Frontmatter}

\title[Distribution of sums involving Dirichlet characters over the {$k$}-free integers]{Distribution of sums involving Dirichlet characters over the {$k$}-free integers\thanks{This project is supported in part by the Coordenação de Aperfeiçoamento de Pessoal de Nível Superior - Brasil (CAPES), FAPEMIG Grant Universal No. APQ-00256-23 and CNPq Grant Universal No. 403037/2021-2.}}

\author{Caio Bueno}

\authormark{C. Bueno}

\address{\orgname{Universidade Federal de Minas Gerais, UFMG}.\\
\orgname{Centro Federal de Educação Tecnológica de Minas Gerais, CEFET-MG}.\\
%\orgaddress{\city{Belo Horizonte}, \country{Brazil}}
\email{caiomafiabueno@gmail.com}}

\keywords[AMS subject classification]{11L40, 11M06}

\keywords{Dirichlet characters, generalized Riemann hypothesis, $k$-free integers, multiplicative functions}

\abstract{Assuming the generalized Riemann hypothesis and a bound for the negative discrete moments of the Riemann zeta function (resp. Dirichlet $L$-functions), we prove the existence of a logarithmic limiting distribution for the normalized partial sums $x^{-1/(2k)}\sum_{n\leq x}f(n)$, where $f$ is either a quadratic Dirichlet character or a modified Dirichlet character, restricted to the $k$-free integers. Moreover, we strengthen a conjecture made by Aymone, Medeiros and the author (cf. Ramanujan J. 59(3):713–728, 2022) concerning the precise order of magnitude for these partial sums.}

\end{Frontmatter}

\section{Introduction and main results}

We start this manuscript by defining a class of functions that are closely related to Dirichlet characters. These functions, that are going to be recurrent in what follows, are called modified Dirichlet characters, since they agree with some character $\chi$ in all, but a finite subset of prime numbers. Precisely, we have the following:
    \begin{definition}
        A function $\Tilde{\chi}:\mathbb{N}\to\{z\in\mathbb{C} : |z|\leq 1\}$ is said to be a modified Dirichlet character if it is completely multiplicative and there exists a Dirichlet character $\chi$ and a finite subset of prime numbers $\mathcal{S}$, such that,
            \begin{align*}
                \Tilde{\chi}(p)
                    \begin{cases}
                        = \chi(p)        &\text{ if $p\notin\mathcal{S}$,}\\
                        \neq\chi(p)     &\text{ if $p\in\mathcal{S}$.}
                    \end{cases}
            \end{align*}
    \end{definition}

We are concerned with the special case where we take $\chi$ to be a real non-principal Dirichlet character of modulus $q$. As in previous papers on the topic, we choose to denote our modified character by $g_\chi$ and define it as
    \begin{align*}
        g_\chi(p)=  \begin{cases}
                        \chi(p) &\text{ if $p\nmid q$,}\\
                        1       &\text{ if $p\mid q$.}
                    \end{cases}
    \end{align*}

That is, our modification set $\mathcal{S}$ consists of primes $p$ that divides the modulus $q$.

Moreover, let $\mu^{(k)}$ be the indicator function of the $k$-free integers.

Recently, \citeauthor{Aymone} proposed the question of finding an example of a multiplicative function $f:\mathbb{N}\to\{-1,0,+1\}$ that resembles\footnote{In the sense that $f$ is supported on the square-free integers and takes values $\pm 1$ at prime numbers.} the Möbius function and such that its partial sums have cancellation greater than square-root. In his investigation, the author proved that
    \[
        \sum_{n\leq x}\mu^{(2)}(n)g_\chi(n)\ll x^{\frac{2}{5}+\varepsilon},\forall\varepsilon>0,
    \]
assuming the Riemann hypothesis.

Later, this result was generalized to the $k$-free integers by Aymone, Medeiros and the author \cite{Aymone-Bueno-Medeiros}. Under the Riemann hypothesis for $L$-functions (or generalized Riemann hypothesis) we have
    \[
        \sum_{n\leq x}\mu^{(k)}(n)g_\chi(n)\ll x^{1/(k+\frac{1}{2})+\varepsilon},\forall\varepsilon>0.
    \]

In 2023, \citeauthor{Liu} made an improvement for the square-free case and obtained, also conditionally to the Riemann hypothesis,
    \[
        \sum_{n\leq x}\mu^{(2)}(n)g_\chi(n)\ll x^{\frac{1}{3}+\varepsilon},\forall\varepsilon>0.
    \]

More recently, the author \cite{Bueno} proved that
    \[
        \sum_{n\leq x}\mu^{(k)}(n)g_\chi(n)\ll x^{\frac{1}{k+1}+\varepsilon},\forall\varepsilon>0,
    \]
conditionally to the generalized Riemann hypothesis.

    \begin{remark}\label{conjecture-for-modifchar}
        The correct order of magnitude for these partial sums is believed \cite[See][]{Aymone-Bueno-Medeiros} to be the same as the conjectural error term in the summatory function of the $k$-free integers, that is, we expect that the summatory function of $\mu^{(k)}g_\chi$ oscillates at most $x^{\frac{1}{2k}+\varepsilon},\forall\varepsilon>0$.
    \end{remark}

    \begin{remark}
        Although we could only find unconditional results for the partial sums of $\mu^{(k)}\chi$ in the literature \cite[See][]{Munsch2014,Liu-Zhang2005}, the same arguments to derive the results above can be made. This will become clear after we show the similarity between the Dirichlet series of $\mu^{(k)}\chi$ and $\mu^{(k)}g_\chi$.
    \end{remark}

For a more thoroughly discussion on this and some related problems, as well as the motivation behind the study of modified Dirichlet characters, we refer to \cite{Bueno}.

The main purpose of this paper is to analyze limiting distributions of Dirichlet characters and modified Dirichlet characters, over the $k$-free integers.
    \begin{definition}\label{def-limiting-distribution}
        Let $h:[0,\infty)\to\mathbb{R}$ and $\nu$ a probability measure on $\mathbb{R}$. We say that the function $h$ has a limiting distribution $\nu$ in $\mathbb{R}$ if
            \begin{align*}
                \lim_{Y\to\infty}\frac{1}{Y}\int_0^Y g(h(y))\,dy=\int_{-\infty}^\infty g(x)\,d\nu(x),
            \end{align*}
        for all bounded, continuous function $g$ on $\mathbb{R}$.
    \end{definition}
    
In the above sense, our first result states:
    \begin{theorem}\label{cor-distribution}
        Assume the generalized Riemann hypothesis. Let $\chi$ modulo $q$ be a primitive real non-principal Dirichlet character and $k\geq 2$ a fixed integer. Additionally, suppose that one of the following holds:
            \begin{enumerate}[(i)]
                \item \label{first-hypothesis} $f=\mu^{(k)}\chi$ (resp. $\mu^{(k)}g_\chi$), $k$ is even and $\sum_{0<\gamma\leq T}|\zeta'(\rho)|^{-2}\ll T^{1+\frac{1}{k}-\varepsilon},\forall\varepsilon>0$;
                \item \label{second-hypothesis} $f=\mu^{(k)}\chi$ (resp. $\mu^{(k)}g_\chi$), $k$ is odd and $\sum_{0<\gamma\leq T}|L'(\rho,\chi)|^{-2}\ll T^{1+\frac{1}{k}-\varepsilon},\forall\varepsilon>0$;
            \end{enumerate}
        where $\rho=\frac{1}{2}+i\gamma$ are the non-trivial zeros of $\zeta(s)$ in the first case or $L(s,\chi)$ in the second case. Then $e^{-\frac{y}{2k}}\sum_{n\leq e^y}f(n)$ has a limiting distribution $\nu_k$ on $\mathbb{R}$.
    \end{theorem}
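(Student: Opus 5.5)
The plan follows the Rubinstein--Sarnak/Akbary--Ng--Shahabi framework: write $e^{-y/(2k)}\sum_{n\le e^y}f(n)$ as a constant plus a trigonometric sum over zeros plus an error that is small in mean square, and then apply the general theorem that a function of this shape has a limiting distribution.

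\textbf{Step 1: the Dirichlet series.} For $\Re s>1$,
\[
\sum_{n\ge1}\frac{\mu^{(k)}(n)\chi(n)}{n^s}=\frac{L(s,\chi)}{L(ks,\chi^k)} .
\]
If $k$ is even, then $\chi^k=\chi_0$ and $L(ks,\chi_0)=\zeta(ks)\prod_{p\mid q}(1-p^{-ks})$. If $k$ is odd, then $\chi^k=\chi$. For $g_\chi$, the primes $p\mid q$ contribute the finite Euler factors $(1-p^{-ks})/(1-p^{-s})$. These factors are holomorphic and nonvanishing for $\Re s>0$, apart from poles on $\Re s=0$. So in both cases the generating function is $F(s)=L(s,\chi)H(s)/Z(ks)$, where $Z=\zeta$ ($k$ even) or $Z=L(\cdot,\chi)$ ($k$ odd) and $H$ is harmless in $\Re s>1/(2k)-\delta$.

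Under GRH the poles of $F$ in $\Re s>0$ are exactly at $s=\rho/k=\frac{1}{2k}+\frac{i\gamma}{k}$, and they are simple if we assume the zeros are simple. Simplicity is implicit in the hypothesis, since $|\zeta'(\rho)|^{-2}$ must be finite. The numerator $L(s,\chi)$ has no poles because $\chi$ is non-principal. The residues are
\[
\frac{L(\rho/k,\chi)H(\rho/k)}{k\,Z'(\rho)}\cdot\frac{x^{\rho/k}}{\rho/k}.
\]

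\textbf{Step 2: explicit formula.} Apply truncated Perron with height $T$ and shift the contour to $\Re s=\frac{1}{2k}-\delta$, with $\delta$ small. The line must stay to the right of the poles of $H$ on $\Re s=0$ and of the zeros of $L(ks,\cdot)$ with real part $\frac1{2k}$. Then
\[
e^{-y/(2k)}\sum_{n\le e^y}f(n)=\Re\sum_{0<\gamma\le T}r_\gamma e^{i\gamma y/k}+\mathcal{E}(y,T),\qquad r_\gamma=\frac{2L(\rho/k,\chi)H(\rho/k)}{\rho\,Z'(\rho)}.
\]
The error $\mathcal{E}(y,T)$ collects the left contour, the horizontal segments and the Perron error. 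Handling it requires choosing $T$ through a sequence $T_n$ along which $|Z(\sigma+iT)|^{-1}\ll T^{\varepsilon}$ uniformly near the critical line; such $T_n$ exist under RH by standard arguments. On the shifted line one uses the GRH bounds $L(s,\chi)\ll |t|^\varepsilon$ and the upper bound for $1/Z(ks)$ on $\Re(ks)=\frac12-k\delta$, which is to the left of the critical line, via the functional equation.

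\textbf{Step 3: the $\ell^2$ condition (main obstacle).} The limiting-distribution theorem needs
\[
\int_Y^{2Y}|\mathcal{E}(y,e^{Y})|^2dy\ll Y^{1-\eta}\quad\text{or}\quad\frac1Y\int_0^Y|\mathcal{E}(y,T)|^2dy\to0 \text{ as } T\to\infty,
\]
together with a bound on $\sum_{T<\gamma\le 2T}|r_\gamma|$. By Cauchy--Schwarz, $\sum_{T<\gamma\le2T}|r_\gamma|^2$ and the tails are controlled by
\[
\sum_{T<\gamma\le 2T}\frac{|L(\rho/k,\chi)|^2}{|\rho|^2|Z'(\rho)|^2}\ll T^{-2+\varepsilon}\sum_{T<\gamma\le2T}|Z'(\rho)|^{-2}.
\]
Here $|L(\rho/k,\chi)|$ is evaluated at real part $\frac1{2k}<\frac12$, and the convexity/functional-equation bound under GRH gives $|L(\frac1{2k}+it,\chi)|\ll |t|^{\frac12-\frac1{2k}+\varepsilon}$. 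So $|L|^2\ll T^{1-\frac1k+\varepsilon}$. With the hypothesis $\sum|Z'(\rho)|^{-2}\ll T^{1+\frac1k-\varepsilon}$ the dyadic sum becomes $\ll T^{-\varepsilon'}$, and it converges to zero. This is precisely why the exponent $1+\frac1k$ appears. The same estimate bounds the mean square of $\sum_{\gamma>T}r_\gamma e^{i\gamma y/k}$ via a Montgomery--Vaughan type mean value inequality, using that the ordinates are well spaced on average.

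\textbf{Step 4: conclude.} Conclude as in Akbary--Ng--Shahabi (or Rubinstein--Sarnak). A function $\phi(y)=\Re\sum_\gamma r_\gamma e^{i\gamma y/k}+\mathcal{E}(y)$ with the above square-summability and mean-square control is a $B^2$-almost periodic function. Its truncations $\phi_T$ have limiting distributions by Kronecker--Weyl equidistribution on the torus closure of $\{(\gamma y/k)_\gamma\}$. Then $\limsup_Y\frac1Y\int_0^Y|\phi-\phi_T|^2\to0$ lets one pass to the limit through Lipschitz test functions, and hence bounded continuous ones. This yields $\nu_k$.

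The real work is verifying Step 3 and the contour error of Step 2 uniformly in $y$. The delicate point is controlling $1/Z(ks)$ just left of $\Re s=\frac1{2k}$ together with the growth of $L(s,\chi)$ there.
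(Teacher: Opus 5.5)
Your architecture matches the paper's: Perron's formula, a contour kept in $\Re s>0$, residues $L(\rho/k,\chi)H(\rho/k)x^{\rho/k}/(\rho Z'(\rho))$, a unit-interval mean-square bound for the tail of the zero sum, then $B^2$-almost periodicity. However, Step~2 breaks down. On $\Re s=\sigma_1:=\frac1{2k}-\delta$ you are left of the critical line of $L(s,\chi)$, and there $L(s,\chi)\ll|t|^{\varepsilon}$ is false. GRH plus the functional equation give only $|L(\sigma_1+it,\chi)|\ll|t|^{\frac12-\sigma_1+\varepsilon}$, the very bound you use in Step~3. Combined with $|Z(ks)|^{-1}\ll|t|^{k\sigma_1-\frac12+\varepsilon}$, the left vertical integral is only $\ll x^{\sigma_1}T^{(k-1)\sigma_1+\varepsilon}$. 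After dividing by $x^{1/(2k)}$ this is $x^{-\delta}T^{\frac{k-1}{2k}-(k-1)\delta+\varepsilon}$. The Perron error $x\log x/T$ forces $T\ge x^{1-\frac1{2k}+\eta}$, so for small $\delta$ this bound is a positive power of $x$ (roughly $x^{3/16}$ for $k=2$). Your bound for $\mathcal{E}$ is therefore not even $O(1)$, let alone small in mean square. Since $H$ may have poles on $\Re s=0$, you cannot push the line to $-\infty$ as Ng and Meng do. Instead you must place it as close to $\Re s=0$ as possible. The paper takes $\sigma_1=\varepsilon$ in \Cref{lemma-specific-T}, leaving a residual term $x^{\varepsilon}T^{\varepsilon}$.

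That residual still grows with $T$, so the truncation height must be tied to $x$. With the fixed height $T=e^Y$ used in \Cref{distribution-result-Ng}, the normalized term $x^{\varepsilon-\frac1{2k}}T^{\varepsilon}$ contributes $\asymp e^{2\varepsilon Y}$ to $\int_{y_0}^Y|\mathcal{E}(y,e^Y)|^2\,dy$. The paper's assertion that this integral is $\ll 1$ overlooks this. The fix uses exactly your Step~3 tool:

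\begin{enumerate}
\item For $y\in[V,V+1)$, apply \Cref{lemma-general-T} at height $T_V=e^{2(V+1)}$. Every error term then decays exponentially in $y$ once $\varepsilon<\frac1{6k}$.
\item Write the difference between the zero sums up to $T_V$ and up to $e^Y$ as a sum over a $y$-independent range. By \Cref{main-lemma}, its mean square over $[V,V+1]$ is $\ll\min(T_V,e^Y)^{-\eta}$.
\item Summing over $V$ gives $\int_{y_0}^Y|\mathcal{E}(y,e^Y)|^2\,dy\ll 1$, which is what \Cref{distribution-result-Ng} needs.
\end{enumerate}

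Relatedly, the condition you quote, $\int_Y^{2Y}|\mathcal{E}(y,e^Y)|^2\,dy\ll Y^{1-\eta}$, is not the right one: on $[Y,2Y]$ the Perron error at height $e^Y$ is huge. The requirement is $\frac1Y\int_{y_0}^Y|\mathcal{E}(y,e^Y)|^2\,dy\to 0$.
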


Under these stronger hypothesis, we also obtain a result of similar magnitude than our expected sharp bound (see \Cref{conjecture-for-modifchar} above).
    \begin{theorem}\label{log-meas-zero-thm}
        Assume the generalized Riemann hypothesis and let $\chi$ modulo $q$ be a primitive real non-principal Dirichlet character and $k\geq 2$ a fixed integer. Furthermore, assume \Cref{first-hypothesis} or \Cref{second-hypothesis} from the previous theorem. Then, we have
            \begin{align}\label{excp-logarithmic-measure-result}
                \sum_{n\leq x}f(n)\ll_{\varepsilon}x^{\frac{1}{2k}}(\log x)^{\frac{1}{2}+\varepsilon},\forall\varepsilon>0,
            \end{align}
        except on a set of finite logarithmic measure\footnote{We say that a set $\mathcal{A}$ has finite logarithmic measure if $\int_{\mathcal{A}\cap [1,\infty)}\,\frac{dx}{x}<\infty$.}.
    \end{theorem}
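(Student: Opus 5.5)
The plan follows the Rubinstein--Sarnak / Ng approach: an explicit formula, an $L^2$ bound over dyadic blocks in logarithmic scale, then Chebyshev plus Borel--Cantelli.

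\textbf{Step 1: explicit formula.} For $\Re s>1$ write $F(s)=\sum_{n}f(n)n^{-s}=\frac{L(s,\chi)}{L(ks,\chi^k)}$. For $g_\chi$ there is an extra finite Euler factor over $p\mid q$, which is holomorphic and nonvanishing near $\Re s=1/k$. When $k$ is even, $\chi^k$ is principal, so the denominator is $\zeta(ks)$ up to finitely many Euler factors. When $k$ is odd, $\chi^k=\chi$. In both cases the poles of $F$ in $\Re s>0$ are simple poles at $s=\rho/k$, under GRH and simplicity of zeros (the latter is implicit in the moment hypothesis). Their real parts equal $1/(2k)$, and their residues are $L(\rho/k,\chi)\,/\,(k\,\mathcal L'(\rho))$, where $\mathcal L=\zeta$ or $L(\cdot,\chi)$. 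Perron's formula with truncation at height $T$ and contour shift to $\Re s=\sigma_0$, with $1/(2k+2)<\sigma_0<1/(2k)$, should give, for $x=e^y$,
\begin{align*}
e^{-\frac{y}{2k}}\sum_{n\le e^y}f(n)=\Re\sum_{|\gamma|\le T}c_\gamma e^{i\gamma y/k}+E(y,T),
\qquad c_\gamma=\frac{L(\rho/k,\chi)}{\rho\,\mathcal L'(\rho)} .
\end{align*}
I will take this, together with the control of $E$, as given by the same analysis that yields \Cref{cor-distribution}: $E(y,T)$ is small in mean square over $y\in[Y,Y+1]$ once $T=T(Y)$ is large.

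\textbf{Step 2: $L^2$ bound on blocks.} On $[Y,Y+1]$, expand the square $\int_Y^{Y+1}\bigl|\sum_{|\gamma|\le T}c_\gamma e^{i\gamma y/k}\bigr|^2dy$. Off-diagonal terms are controlled by $\min(1,|\gamma-\gamma'|^{-1})$, and each window of length $1$ contains $\ll\log\gamma$ ordinates. This reduces the integral to $\ll\sum_\gamma|c_\gamma|^2\log(|\gamma|+2)$.

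By GRH, $L(\rho/k,\chi)\ll|\gamma|^{\varepsilon}$ fails at $\Re=1/(2k)<1/2$. The convexity/functional-equation bound gives instead $|L(\rho/k,\chi)|\ll|\gamma|^{\frac12-\frac1{2k}+\varepsilon}$. Hence
\begin{align*}
|c_\gamma|^2\ll|\gamma|^{-1-\frac1k+\varepsilon}|\mathcal L'(\rho)|^{-2}.
\end{align*}
Partial summation against the hypothesis $\sum_{0<\gamma\le T}|\mathcal L'(\rho)|^{-2}\ll T^{1+\frac1k-\varepsilon}$ shows that $\sum_\gamma|c_\gamma|^2\log|\gamma|$ converges. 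This gives $\int_Y^{Y+1}|e^{-y/(2k)}S(e^y)|^2dy\ll1$ uniformly in $Y$.

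\textbf{Step 3: upgrade to the exceptional-set statement.} Let $A_Y$ be the set of $y\in[Y,Y+1]$ with $|e^{-y/(2k)}S(e^y)|>Y^{1/2+\varepsilon}$. By Chebyshev, $\meas(A_Y)\ll Y^{-1-2\varepsilon}$. Summing over $Y\in\mathbb N$ gives a set of finite measure in $y=\log x$, that is, of finite logarithmic measure in $x$. Outside this set, $S(x)\ll x^{1/(2k)}(\log x)^{1/2+\varepsilon}$.

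\textbf{Main obstacle.} The delicate point is the pointwise-versus-$L^2$ issue: the exceptional set must capture every $x$ where the bound fails, including the contribution of $E(y,T)$. I would handle it by choosing $T=e^{2y}$ and checking that the truncated Perron error is $O(1)$ pointwise, with the remaining tail handled in mean square. Failing that, I would first work with the smoothed integral $\int_1^x S(t)\,dt/t$ and compare.

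The second hazard is whether the exponent bookkeeping in Step 2 truly closes. The hypothesis exponent $1+\frac1k$ is designed exactly to offset $|\gamma|^{-1-\frac1k}$, so convergence may hinge on the $\varepsilon$-loss. The logarithmic factor $(\log x)^{1/2+\varepsilon}$ leaves room to absorb $\log$-losses from $\sum|c_\gamma|^2$ growing like $\log T$.
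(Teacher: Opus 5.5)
\textbf{Verdict.} Your architecture is the paper's: the explicit formula of \Cref{lemma-general-T} with $T$ a power of $x$, a mean-square bound on unit intervals in $y=\log x$, Chebyshev, and summation over blocks. Skipping the paper's split at $|\gamma|=\log T$ is harmless. (The paper bounds $|\gamma|<\log T$ pointwise and uses mean square only on $\log T\le|\gamma|<T$, gaining $M^{-\varepsilon}$.) What you need instead is that the whole zero sum has mean square $O(1)$ on each $[Y,Y+1]$. That is the step you do not justify.

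\textbf{The gap in Step 2.} Suppose you expand the square over the sharp interval and bound the off-diagonal integrals by $\min(1,k/|\gamma-\gamma'|)$. Using $|c_\gamma c_{\gamma'}|\le\frac12(|c_\gamma|^2+|c_{\gamma'}|^2)$ then leaves $\sum_\gamma|c_\gamma|^2\sum_{|\gamma'|\le T}\min(1,k/|\gamma-\gamma'|)$. The inner sum is of size $(\log T)^2$, not $\log|\gamma|$, because the harmonic weights $1/|\gamma-\gamma'|$ summed against zero density $\log\gamma'$ grow like $(\log T)^2$. The Perron term $x\log x/T$ forces $T\gg x^{1-1/(2k)}\log x$, so this is a loss of order $Y^2$.

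Your closing remark that $(\log x)^{1/2+\varepsilon}$ leaves room for such losses is mistaken. A mean square of size $Y^{a}$ gives $\meas(A_Y)\ll Y^{a-1-2\varepsilon}$, which is summable only if $a<2\varepsilon$. So even a single factor $\log T\asymp Y$ breaks Step 3. The exponent $\frac12$ is calibrated exactly to an $O(1)$ (or $Y^{o(1)}$) mean square.

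\textbf{How to repair it.} The standard fix is to majorize $\mathbf{1}_{[Y,Y+1]}$, up to a constant, by a nonnegative Fej\'er kernel $w$ centred at $Y+\frac12$ whose Fourier transform is supported in $|\xi|\le 1/k$ (Gallagher's lemma). Then all pairs with $|\gamma-\gamma'|>1$ drop out, and $\int_Y^{Y+1}|\sum_\gamma c_\gamma e^{i\gamma y/k}|^2\,dy\ll\sum_{|\gamma-\gamma'|\le 1}|c_\gamma c_{\gamma'}|\ll\sum_\gamma|c_\gamma|^2\log(|\gamma|+2)$. Your partial summation then closes the argument. Alternatively, invoke \Cref{Ng2025} with \Cref{assumption-1} checked as in \Cref{main-lemma}. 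That is the paper's route, where far-apart pairs are controlled by the power decay of the $c_\gamma$.

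\textbf{Two smaller corrections.} First, the bound $|L(\rho/k,\chi)|\ll|\gamma|^{\frac12-\frac1{2k}+\varepsilon}$ comes from the functional equation plus the Lindel\"of bound under GRH, not from convexity. Convexity only gives $|\gamma|^{\frac12-\frac1{4k}+\varepsilon}$, and with that $\sum_\gamma|c_\gamma|^2$ diverges against $T^{1+\frac1k-\varepsilon}$.

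Second, your contour at $\sigma_0\in(\frac1{2k+2},\frac1{2k})$ is bad for pointwise control. The vertical segment there is only $\ll x^{\sigma_0}T^{(k-1)\sigma_0+\varepsilon}$, far above $x^{1/(2k)}$ for admissible $T$. Shifting to $\sigma_1=\varepsilon$, as in \Cref{lemma-specific-T}, gives $\Tilde{E}(x,T)\ll x^{\frac1{2k}-\varepsilon}(\log x)^{1/2}$ pointwise for $T\asymp x$. So the pointwise-versus-$L^2$ issue you flag does not arise. In any case, Chebyshev applied to the whole normalized sum only needs $E$ in mean square.
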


An active line of research is to study the behaviour of the partial sums of multiplicative functions. A commonality present in some functions that we find in the literature is their connection to the Riemann zeta function.

In most cases, this connection can be readily seen by means of the Euler product formula. A simple calculation shows that the Möbius function can be related to the Riemann zeta function by
    \begin{align*}
        \frac{1}{\zeta(s)}=\sum_{n=1}^\infty\frac{\mu(n)}{n^s},
    \end{align*}
for $\Re(s)>1$. Another example is the generating series of the function $\mu^{(k)}$:
    \begin{align*}
        \frac{\zeta(s)}{\zeta(ks)}=\sum_{n=1}^\infty\frac{\mu^{(k)}(n)}{n^s},
    \end{align*}
for $\Re(s)>1$. Therefore, the partial sums of these functions are also linked to the Riemann zeta function by employing the Mellin transform (simply making a partial summation) or Perron's formula.

The functions that we are interested also have a close relation to the Riemann zeta function and to the Dirichlet $L$-functions, as we are going to see more carefully later.

Probability viewpoints were implemented by many authors in the study of problems in Analytic Number Theory and arithmetic functions, given its usefulness in providing heuristics arguments and establish precise conjectures. Particularly, limiting distribution has been notorious in the investigation of the possible true order of magnitude of some summatory functions.

We emphasize that conjectures for moments of the Riemann zeta function are essential in our proofs. Specifically, as in \Cref{first-hypothesis,second-hypothesis} of \Cref{cor-distribution}, we need estimates for the discrete moments:
    \[
        J_{r}(T)\coloneqq\sum_{0<\gamma \leq T}|\zeta'(\rho)|^{2r},
    \]
where $\rho=\beta+i\gamma$ are the non-trivial zeros of the Riemann zeta function and $r\in\mathbb{R}$.

In 1989, Gonek (for $r\leq 0$) and Hejhal (for $r>0$) independently conjectured the following bound:
    \begin{conjecture}[\citeauthor{Gonek1989,Hejhal1989}]\label{Gonek-Hejhal-conjecture}
        For all $r\in\mathbb{R}$, we have
            \[
                J_{r}(T)\asymp T(\log T)^{(r+1)^2}.
            \]
    \end{conjecture}

Later, a restriction on $r$ arose in the work of Hughes, Keating and O'Connell \cite{Hughes-Keating-OConnell2000}. In fact, we have a very precise conjecture obtained through a model for the Riemann zeta function using the characteristic polynomial of a random unitary matrix.
    \begin{conjecture}[\citeauthor{Hughes-Keating-OConnell2000}]\label{keating-hughes-oconell-conject}
        For all $r\in\mathbb{C}$ such that 
        $\Re(r)>-3/2$, we have
            \begin{align*}
                J_{r}(T)\sim \frac{1}{2\pi}\frac{G^2(r+2)}{G(2r+3)}\alpha(r)T(\log T)^{(r+1)^2},
            \end{align*}
        where $G$ is Barnes' $G$-function defined by
            \[
                G(s+1)=(2\pi)^{\frac{s}{2}}\exp\bigg(-\frac{1}{2}(s^2+\gamma s^2+s)\bigg)\prod_{n=1}^\infty \bigg(1+\frac{s}{n}\bigg)^n\exp\bigg(-s+\frac{s^2}{2n}\bigg),
            \]
        with $\gamma$ being the Euler-Mascheroni constant and $s\in\mathbb{C}$, and
            \[
                \alpha(r)=\prod_p\bigg(1-\frac{1}{p}\bigg)^{r^2}\bigg(\sum_{m= 0}^\infty\bigg(\frac{\Gamma(m+r)}{m!\Gamma(r)}\bigg)^2\frac{1}{p^m}\bigg).
            \]
        The product above ranges over all prime numbers $p$.
    \end{conjecture}

Results related to these moments are very limited. Some examples we have are \cite{Gao-Zhao,Gonek1989,Heap-Li-Zhao2022,Milinovich-Ng2014} and \cite{Milinovich2010}, for lower and upper bounds, respectively. We also refer to the Introduction of \cite{Humphries2013}, where Humphries gave a nice survey on the subject.

It seems reasonable that a similar conjecture should hold for moments of Dirichlet $L$-functions.\footnote{See also \cite{Akbary-Ng-Shahabi2014}, where the authors assume a similar type of conjectural bound.}
    \begin{definition}
        Let $\kappa\geq 1$ be an integer, $\chi$ a primitive Dirichlet character of modulus $q$ and $\rho=\beta+i\gamma$ the non-trivial zeros of the Dirichlet $L$-function associated to the character power $\chi^\kappa$. For all $r\in\mathbb{R}$, define
            \[
                \Tilde{J}_{r}(T)\coloneqq\sum_{0<\gamma\leq T}|L'(\rho,\chi^\kappa)|^{2r}.
            \]
    \end{definition}
    \begin{conjecture}\label{Lfuncwithcte-conj}
        For all $r\in\mathbb{C}$ with $\Re(r)>-3/2$, there is a constant $C_{q,r}$, such that
            \[
                \Tilde{J}_{r}(T)\sim C_{q,r} T(\log T)^{(r+1)^2}.
            \]
    \end{conjecture}

In the majority of this paper we assume a weaker hypothesis, namely \Cref{first-hypothesis,second-hypothesis}. However, in \Cref{large-deviation-section}, we may assume \Cref{Gonek-Hejhal-conjecture} and
    \begin{align}\label{conjecture-L}
        \Tilde{J}_{r}(T)\asymp T(\log T)^{(r+1)^2},
    \end{align}
with $\kappa=1$.

Moreover, our constants will be implicit and therefore we refrain from applying \Cref{keating-hughes-oconell-conject,Lfuncwithcte-conj}.

As we are going to adress the topic of distribution functions, we briefly comment on this subject. A distribution function $F:\mathbb{R}\to[0,1]$ satisfies the following properties:
    \begin{itemize}
        \item $F$ is non-decreasing
        \item $\lim_{x\to-\infty}F(x)=0$ and $\lim_{x\to\infty}F(x)=1$
        \item $F$ is ``càdlàg'', i.e., right-continuous and has a limit on the left of each $x\in\mathbb{R}$.
    \end{itemize}

We also know that, given a distribution function $F$, there exists a probability $\mathbb{P}$ related to $F$, i.e., $F_{\mathbb{P}}(x)\coloneqq \mathbb{P}((-\infty,x])$.

Therefore, by showing that $e^{-\frac{y}{2k}}\sum_{n\leq e^y}f(n)$ has a limiting distribution, one could attempt to extract useful heuristics and informations about the statistical behavior of the partial sums $\sum_{n\leq e^y}f(n)$.

This was the framework of \citeauthor{Ng2004}, followed by \citeauthor{Humphries2013}, \citeauthor{Akbary-Ng-Shahabi2014} and then by \citeauthor{Meng2017}. We will discuss some tools and steps in \Cref{section-discussion-limiting-dist}.

In \Cref{large-deviation-section} we also state large deviations results and conjectures, similar to those obtained by \citeauthor{Meng2017} in his work on the distribution of the $k$-free numbers. These are consequences of the existence of a limiting distribution and the approach taken was primarily studied by \citeauthor{Ng2004}, for the partial sums of the Möbius function.

    \begin{remark}
        Proofs contained in this paper are written with more details only for the function $f=\mu^{(k)}\chi$ and, throughout the text, we will point out the main modifications if we take $f=\mu^{(k)}g_\chi$. Keeping in mind that everything works for $\mu^{(k)}g_\chi$, we expect to keep the exposition more simple by adopting this structure.
    \end{remark}

\subsection{Notation}

If $f(x)=O(g(x))$ (likewise $f(x)\ll g(x)$) we mean that exists a constant $C>0$ such that $|f(x)|\leq C g(x)$ for all sufficiently large $x$. If $f\asymp g$, then $g\ll f \ll g$. We write $f\sim g$ to say that $f$ is asymptotically equivalent to $g$, that is, $\lim_{x\to\infty}\frac{f(x)}{g(x)}=1$. Finally, if $f(x)=o(g(x))$, then $|f(x)|\leq \varepsilon g(x)$, for any $\varepsilon>0$ and all sufficiently large $x$. In particular, $f(x)=o(1)$ if $f(x)$ tends to zero as $x\to\infty$.

If any of the above asymptotic notations has one or more subindexes, it means that the implicit constant depends on those indicated parameters (e.g. $O_\varepsilon, \ll_{\varepsilon,k}$).

Unless stated otherwise, the letter $p$ will always be a prime number and the product written as $\prod_p$ means that it runs over all prime numbers.

Finally, for any real number $x$, the integer part of $x$ is denote by $[x]$.

\section{Discussion of results concerning limiting distributions}\label{section-discussion-limiting-dist}

In 2004, \citeauthor{Ng2004} proved, under the Riemann Hypothesis and the conjecture $J_{-1}(T)\ll T$, that $x^{-\frac{1}{2}}\sum_{n\leq x}\mu(n)$ possess a limiting distribution. With the methods of \citeauthor{Montgomery1980}, Ng also studied the tail of this distribution to show large deviation results and conjectures. As a consequence, the following was formulated:
    \begin{conjecture}
        There exists a constant $B>0$ such that
            \begin{align*}
                \overline{\underline{\lim}}_{x\to\infty}\frac{\sum_{n\leq x}\mu(n)}{x^{\frac{1}{2}}{(\log\log\log x)^{\frac{5}{4}}}}=\pm B.
            \end{align*}
    \end{conjecture}

The above is widely believed to be true and the conjectural constant was greatly improved recently \cite[See][]{ng2025primenumbererrorterms}.

Later, in 2013, \citeauthor{Humphries2013} proved that the sum of a weighted Liouville function\footnote{The Liouville function $\lambda(n)$ is defined as the completely multiplicative function that takes value $-1$ at each prime number.},
    \begin{align*}
        \sum_{n\leq x}\frac{\lambda(n)}{n^\alpha}, 0\leq \alpha <\frac{1}{2},
    \end{align*}
has a limiting distribution, also under the Riemann Hypothesis and $J_{-1}(T)\ll T$.

A year after, \citeauthor{Akbary-Ng-Shahabi2014} proved that, if $\varphi(y)$ is a $B^p$-almost periodic function, that is, for any $\varepsilon>0$ there exists a real-valued trigonometric polynomial
    \begin{align*}
        P_{N_\varepsilon}(y)=\sum_{n=1}^{N_\varepsilon}r_{n,\varepsilon}e^{i\lambda_{n,\varepsilon}y},
    \end{align*}
such that
    \begin{align*}
        \limsup_{Y\to\infty}\frac{1}{Y}\int_0^Y|\varphi(y)-P_{N_\varepsilon}(y)|^p\,dy<\varepsilon^p,
    \end{align*}
then $\varphi(y)$ possess a limiting distribution.

As pointed out by the authors, although unpublished, this result was in fact known since 1930s.

Moreover, they used the above to prove a more general statement that, under additional hypothesis, guarantee the existence of limiting distributions for a wide class of summatory functions. Various distributions already known in the literature were obtained as a corollary of their main result. It was also established distributions for error terms of some summatory functions, for example, the sum of weighted Möbius and Liouville function, and the error term for the sum of Möbius function in arithmetic progressions.

In 2017, \citeauthor{Meng2017} adapted ideas from \cite{Ng2004} and used results of \cite{Akbary-Ng-Shahabi2014} to prove that the error term of $\sum_{n\leq x}\mu^{(k)}(n)$, after a normalization, also has a limiting distribution. More precisely, under the Riemann Hypothesis and $J_{-1}(T)\ll T^{1+\varepsilon}$,
    \begin{align*}
        x^{-\frac{1}{2k}}M_k(x)=x^{-\frac{1}{2k}}\bigg(\sum_{n\leq x}\mu^{(k)}(n)-\frac{x}{\zeta(k)}\bigg),
    \end{align*}
possess a logarithmic limiting distribution.

More recently, \citeauthor{ng2025primenumbererrorterms} employed some tools used by \citeauthor{Meng2017} and improved the main result of \cite{Akbary-Ng-Shahabi2014} (namely, conditions for a function to be $B^2$-almost periodic - see \Cref{distribution-result-Ng} in the next section), weakening some conditions and stating its theorems in a more general form.

We refer to \cite{Akbary-Ng-Shahabi2014} and the references therein for a more complete historical overview on the subject of limiting distributions.

\section{Preliminaries}\label{section-preliminaries}

Let $F(s)$ be the Dirichlet series with coefficients $f=\mu^{(k)}\chi$, where $\chi$ is a non-principal Dirichlet character modulo $q$ and $k\geq 2$ a fixed integer. By the Euler product formula,
    \begin{align*}
        F(s)=\frac{L(s,\chi)}{L(ks,\chi^k)},
    \end{align*}
for $\Re(s)>1$.

We focus mostly on quadratic characters and in this case, for $\Re(s)>1$, we have
    \begin{align*}
        F(s)=
                \begin{dcases}
                    \frac{L(s,\chi)P(ks)}{\zeta(ks)} &\text{ if $k$ is even,}\\
                    \frac{L(s,\chi)}{L(ks,\chi)}    &\text{ if $k$ is odd,}
                \end{dcases}
    \end{align*}
where $P(s)\coloneqq\prod_{p\mid q}(1-\frac{1}{p^s})^{-1}$.

Moreover, when working with modified Dirichlet characters we are implicitly assuming that $\chi$ is real. If we let $G(s)$ be the generating Dirichlet series of $\mu^{(k)}g_\chi$, then we can write
    \begin{align*}
        G(s)=
            \begin{dcases}
                \frac{L(s,\chi)P(s)}{\zeta(ks)} &\text{ if $k$ is even,}\\
                \frac{L(s,\chi)}{L(ks,\chi)}\frac{P(s)}{P(ks)}      &\text{ if $k$ is odd,}
            \end{dcases}
    \end{align*}
where $P(s)$ is the same as defined above and $\Re(s)>1$.

Given this representation, it becomes clear that our functions relates to $\zeta(s)$ and $L(s,\chi)$. We thus discuss two classical results contained in the literature: the first one is regarding the functional equation for these functions and the second one being the analogue, for Dirichlet $L$-functions, of the classical von Mangoldt estimate for the number of non-trivial zeros of the Riemann zeta function.

Recalling the asymmetric functional equation for the Riemann zeta function, expressed with the ratio of Gamma function factor,
    \begin{align*}
        \zeta(s)=\pi^{s-\frac{1}{2}}\frac{\Gamma(\frac{1-s}{2})}{\Gamma(\frac{s}{2})}\zeta(1-s),
    \end{align*}
we also have the following for Dirichlet $L$-functions asssociated to a Dirichlet character $\chi$ of modulus $q$:
        \begin{align}\label{functional-eq-L}
            L(s,\chi)=\frac{\tau(\chi)}{i^\delta \sqrt{q}}\Delta(s)L(1-s,\overline{\chi}),
        \end{align}
    where $\tau(\chi)=\sum_{n=1}^q\chi(n)e^{2\pi in/q}$,
        \begin{align*}
            \delta\coloneqq \delta(\chi)=
                    \begin{cases}
                        0   &\text{ if $\chi(-1)=1$,}\\
                        1   &\text{ if $\chi(-1)=-1$},
                    \end{cases}
        \end{align*}
    and $\Delta(s)$ is the ratio of gamma factors\footnote{Usually the ratio is represented by the letter $\chi$, but we use $\Delta$ to avoid confusion with the Dirichlet characters.}, that is,
        \begin{align*}
            \Delta(s)=
                \begin{cases}
                    \pi^{s-\frac{1}{2}}\Gamma(1-\frac{s}{2})/\Gamma(\frac{s+1}{2})  &\text{ if $\delta=1$,}\\
                        \pi^{s-\frac{1}{2}}\Gamma(\frac{1-s}{2})/\Gamma(\frac{s}{2})    &\text{ if $\delta=0$.}
                \end{cases}
        \end{align*}

A bound that we are going to use recurrently follows from an application of Stirling's formula. One can show that
            \begin{align}\label{gamma-factor-bound}
                |\Delta(\sigma+it)|\asymp |t|^{\frac{1}{2}-\sigma}. 
            \end{align}

From this, we have
    \begin{lemma}[\citeauthor{Montgomery-Vaughan}, Theorems~13.18~and~13.23]\label{Lemma-lowerupper-bound-zeta}
        Assume the Riemann hypothesis. For $|t|\geq 1$, and all $\varepsilon>0$, we have
            \begin{align*}
                \zeta(\sigma+it)\ll_\varepsilon
                    \begin{cases}
                        t^{\frac{1}{2}-\sigma+\varepsilon}  &\text{ if $0<\sigma<\frac{1}{2}$,}\\
                        t^\varepsilon   &\text{ if $\sigma\geq \frac{1}{2}$,}
                    \end{cases}
            \end{align*}
        and also
            \begin{align*}
                \frac{1}{\zeta(\sigma+it)}\ll_\varepsilon
                    \begin{cases}
                        t^{\sigma-\frac{1}{2}+\varepsilon}  &\text{ if $0< \sigma\leq \frac{1}{2}-\frac{1}{\log\log t}$,}\\
                        t^\varepsilon   &\text{ if $\sigma\geq \frac{1}{2}+\frac{1}{\log\log t}$.}
                    \end{cases}
            \end{align*}
    \end{lemma}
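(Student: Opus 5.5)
The bounds for $\zeta$ come from the functional equation and \eqref{gamma-factor-bound}. The bounds for $1/\zeta$ come from the standard RH estimate for $\log\zeta$ near the critical line, with the left half-plane again reached by reflection.

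\textbf{Upper bounds for $\zeta$.} For $\sigma\geq\frac12$ I invoke the Lindelöf-type consequence of RH (Montgomery--Vaughan, Theorem~13.18). Under RH, for $|t|\geq1$ and $\sigma\geq\frac12$,
\[
\log|\zeta(\sigma+it)|\leq C\frac{\log t}{\log\log t}.
\]
This is obtained from the Selberg/Guinand type explicit formula expressing $\log\zeta$ as a short sum over primes $n\leq\log^2 t$ plus an error. The error is controlled because all zeros lie on $\Re s=\frac12$. Hence $\zeta(\sigma+it)\ll_\varepsilon t^\varepsilon$ on $\sigma\geq\frac12$. For $0<\sigma<\frac12$ I write
\[
\zeta(s)=\pi^{s-\frac12}\frac{\Gamma(\frac{1-s}{2})}{\Gamma(\frac s2)}\zeta(1-s).
\]
By \eqref{gamma-factor-bound} the gamma ratio has size $|t|^{\frac12-\sigma}$. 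Since $\Re(1-s)=1-\sigma>\frac12$, the previous case gives $\zeta(1-s)\ll t^\varepsilon$. Multiplying yields $t^{\frac12-\sigma+\varepsilon}$.

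\textbf{Bounds for $1/\zeta$.} For $\sigma\geq\frac12+\frac1{\log\log t}$ I use Theorem~13.23 of Montgomery--Vaughan. It gives, under RH,
\[
\bigl|\log\zeta(\sigma+it)\bigr|\ll \frac{(\log t)^{2-2\sigma}}{(1-\sigma)\log\log t}+\log\log\log t
\]
uniformly for $\frac12+\frac1{\log\log t}\leq\sigma\leq 1-\eta$, and a bound of size $\log\log\log t$ beyond. In this range one has $(\log t)^{2-2\sigma}\ll\log t$, and the factor $(1-\sigma)^{-1}$ is bounded away from $\sigma=1$. Near $\sigma=1$ the classical bound $1/\zeta\ll\log t$ takes over. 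Together these give $\log|1/\zeta|\leq \varepsilon\log t$ for large $t$, hence $1/\zeta\ll_\varepsilon t^\varepsilon$.

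For $0<\sigma\leq\frac12-\frac1{\log\log t}$ I reflect once more:
\[
\frac1{\zeta(s)}=\pi^{\frac12-s}\frac{\Gamma(\frac s2)}{\Gamma(\frac{1-s}2)}\frac1{\zeta(1-s)}.
\]
Here $\Re(1-s)\geq\frac12+\frac1{\log\log t}$, so the previous bound applies to $1/\zeta(1-s)$. By \eqref{gamma-factor-bound} the gamma factor is $\asymp t^{\sigma-\frac12}$. The product is $t^{\sigma-\frac12+\varepsilon}$.

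\textbf{Main obstacle.} The only delicate point is the uniformity of the $\log\zeta$ estimate down to distance $1/\log\log t$ from the critical line. At that distance the main term $(\log t)^{2-2\sigma}/\log\log t$ is still $o(\log t)$, which is exactly why this cutoff is the threshold. I would take that estimate directly from Theorem~13.23 rather than reprove it.
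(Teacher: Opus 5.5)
Your proposal is correct and follows the paper's route: the paper cites Montgomery--Vaughan for the ranges $\sigma\ge\frac12$ and $\sigma\ge\frac12+\frac{1}{\log\log t}$, and obtains the left-hand ranges from the functional equation together with \eqref{gamma-factor-bound}, exactly as you do. The one inaccurate remark is near $\sigma=1$: the bound $1/\zeta\ll\log t$ is not known unconditionally on all of $[1-\eta,1]$, but you do not need it, because under RH the $\log\zeta$ estimate (with $(1-\sigma)^{-1}\le\log\log t$ for $\sigma\le 1-\frac{1}{\log\log t}$, giving at most $(\log t)^{2\eta}$ on $[1-\eta,1-\frac{1}{\log\log t}]$) together with the $\log\log\log t$ bound beyond that point already gives $\log|1/\zeta|=o(\log t)$ there.
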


Clearly a similar result holds for $L(s,\chi)$.

Now onto the second observation. As usual, let $N(T)$ be the number of non-trivial zeros of the Riemann zeta function in the strip $(0,1)$ and up to imaginary height $T$. Von Mangoldt proved that
    \begin{align*}
        N(T)=\frac{T}{2\pi}\log\frac{T}{2\pi e}+O(\log T).
    \end{align*}

Similar to this, we also have:
\begin{theorem}[\citeauthor{Montgomery-Vaughan}, Corollary~14.7]\label{classical-zeros-bound}
    Let $\chi$ modulo $q$ be a primitive Dirichlet character and $N(T,\chi)$ the number of zeros $\rho=\beta+i\gamma$ of $L(s,\chi)$ in the strip $\beta\in(0,1)$ with $|\gamma|\leq T$. We have
        \[
            N(T,\chi)=\frac{T}{2\pi}\log\bigg(\frac{qT}{2\pi e}\bigg)+O(\log (qT)),
        \]
    for $T\geq 4$.
\end{theorem}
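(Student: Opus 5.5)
The statement just given is Montgomery–Vaughan's zero-counting formula for primitive $L(s,\chi)$. I would prove it by the classical argument principle applied to the completed $L$-function, in the same way as von Mangoldt's formula for $\zeta$. Concretely, with $\delta=\delta(\chi)$ as in \eqref{functional-eq-L}, I set
\[
\xi(s,\chi)=\Big(\frac{q}{\pi}\Big)^{(s+\delta)/2}\Gamma\Big(\frac{s+\delta}{2}\Big)L(s,\chi).
\]
This is entire for primitive non-principal $\chi$. Its zeros are exactly the non-trivial zeros of $L(s,\chi)$, and by \eqref{functional-eq-L} it satisfies $\xi(1-s,\overline{\chi})=\frac{i^\delta\sqrt q}{\tau(\chi)}\,\xi(s,\chi)$, where the constant has modulus one. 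For $T\ge4$ not equal to the ordinate of a zero, I take the rectangle $R$ with vertices $-1/2\pm iT$ and $3/2\pm iT$. Then
\[
N(T,\chi)=\frac{1}{2\pi}\Delta_{\partial R}\arg\xi(s,\chi),
\]
and the general case follows by a limiting argument, since $N$ is right-continuous and jumps are $O(\log qT)$.

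I would split $\Delta_{\partial R}\arg\xi$ into the contributions from the three factors. The right edge $\sigma=3/2$ contributes $O(1)$ for $L$, because $|L(s,\chi)-1|<1$ there. The argument change of $(q/\pi)^{(s+\delta)/2}$ around $R$ is $2T\log(q/\pi)/2=T\log(q/\pi)$. For the Gamma factor, Stirling's formula (the same computation that gives \eqref{gamma-factor-bound}) yields $\Delta\arg\Gamma((s+\delta)/2)=2\,\Im\log\Gamma\big(\tfrac{1/2+\delta+iT}{2}\big)+O(1)=T\log\frac{T}{2e}+O(1)$. The left half of the contour I would not compute directly. Using the functional-equation symmetry $s\mapsto 1-\bar s$, which maps zeros of $L(s,\chi)$ to zeros of $L(s,\overline{\chi})$, the argument change of $\xi(s,\chi)$ on the left half is recovered from that of $\xi(s,\overline{\chi})$ on the right half. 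The counts for $\chi$ and $\overline{\chi}$ then combine symmetrically, and so only the right half $\Re s\ge 1/2$ needs analysis. Adding up, the main term is $\frac{1}{2\pi}\big(T\log\frac q\pi+T\log\frac{T}{2e}\big)=\frac{T}{2\pi}\log\frac{qT}{2\pi e}$, and what is left are the contributions of $\arg L$ along the horizontal segments from $3/2\pm iT$ to $1/2\pm iT$ (and their reflections).

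The main obstacle is the standard bound $\arg L(\sigma\pm iT,\chi)=O(\log qT)$ on these horizontal segments. For this I would use the partial-fraction formula
\[
\frac{L'}{L}(s,\chi)=\sum_{|\gamma-t|\le1}\frac{1}{s-\rho}+O(\log q(|t|+2)),
\]
valid for $-1\le\sigma\le2$. It rests on Hadamard's factorization of $\xi$ and on the count $\#\{\rho:|\gamma-t|\le1\}\ll\log q(|t|+2)$, which comes from Jensen's formula together with the convexity bound $L(s,\chi)\ll (q|t|)^{A}$. Integrating this formula along the segment, each term $\Im\log(s-\rho)$ varies by at most $\pi$, and the error term integrates to $O(\log qT)$. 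This gives $O(\log qT)$ per segment and hence the claimed error. Everything here is unconditional, and GRH is not needed.
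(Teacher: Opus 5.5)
Your strategy is the standard one: the argument principle for $\xi(s,\chi)$, Stirling for the Gamma factor, and $\arg L(\sigma\pm iT,\chi)\ll\log qT$ from the partial-fraction formula for $L'/L$. The paper itself gives no proof and only quotes Montgomery--Vaughan. However, your main-term bookkeeping is off by a factor $2$, and that slip is exactly what lets you land on the displayed formula.

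The factor $(q/\pi)^{(s+\delta)/2}$ is entire and zero-free, so its argument change around the closed contour $\partial R$ is $0$. The quantities $T\log(q/\pi)$ and $2\Im\log\Gamma\bigl(\tfrac{1/2+\delta+iT}{2}\bigr)=T\log\frac{T}{2e}+O(1)$ are the changes along the right half $\mathcal{R}$ of the contour only (from $\tfrac12-iT$ through $\tfrac32\pm iT$ to $\tfrac12+iT$). The functional equation combined with conjugation gives $\xi(s,\chi)=\varepsilon(\chi)\overline{\xi(1-\bar s,\chi)}$ with $|\varepsilon(\chi)|=1$. So $s\mapsto 1-\bar s$ preserves the zero set of $L(s,\chi)$; it is $s\mapsto 1-s$ that passes to $\overline{\chi}$. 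The map $s\mapsto 1-\bar s$ carries the left half of $\partial R$ onto $\mathcal{R}$ with reversed orientation, and conjugation flips the sign of the argument. The two sign changes cancel, so the left half contributes exactly as much as $\mathcal{R}$. Therefore
\[
N(T,\chi)=\frac{1}{\pi}\,\Delta_{\mathcal{R}}\arg\xi(s,\chi)=\frac{T}{\pi}\log\frac{qT}{2\pi e}+O(\log qT),
\]
which is twice the main term you obtained.

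Consequently the statement as displayed cannot be proved. For the two-sided count $|\gamma|\le T$ the correct main term is $\frac{T}{\pi}\log\frac{qT}{2\pi e}$. As a sanity check with $q=1$, the zeros of $\zeta$ with $|\gamma|\le T$ number $2N(T)\sim\frac{T}{\pi}\log\frac{T}{2\pi e}$. The expression $\frac{T}{2\pi}\log\frac{qT}{2\pi e}$ is instead the asymptotic for the one-sided count $0<\gamma\le T$. So the displayed version has a factor-$2$ transcription error, and any argument reaching it for $|\gamma|\le T$ must contain a mistake; in yours, it is the step above. The error is harmless for the paper, which only uses $N(T,\chi)\ll T\log T$ and $\#\{T<\gamma\le T+1\}\ll\log T$.

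A minor point: on $\sigma=3/2$ the inequality $|L(s,\chi)-1|<1$ is not available, since $\sum_{n\ge 2}n^{-3/2}=\zeta(3/2)-1\approx 1.61$. Instead, use $|\arg L(s,\chi)|\le|\log L(s,\chi)|\le\log\zeta(3/2)$ from the Euler product, or put the vertical sides at $\sigma=2$ and $\sigma=-1$.
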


In particular, both $N(T)$ and $N(T,\chi)$ are $\sim C_q T\log T$.

\subsection{Ng's result and the existence of a limiting distribution}

Now we formalize the ideas mentioned in \Cref{section-discussion-limiting-dist} concerning the existence of a limiting distribution for a class of summatory functions and present a proposition that is central in our proof.

Before continuing to these result, we first fix a few definitions and consider some hypothesis, similar to Ng's paper \cite{ng2025primenumbererrorterms}.

We have the following general sequences:
    \begin{enumerate}[(i)]
            \item $\boldsymbol{\lambda}=(\lambda_n)_{n\in\mathbb{N}}$ is a non-decreasing sequence of positive numbers.
            \item $\boldsymbol{r}=(r_n)_{n\in\mathbb{N}}$ is a sequence of complex numbers.
    \end{enumerate}

We also define $N_{\boldsymbol{\lambda}}(T)$ as the number of $n\in\mathbb{N}$ such that $\lambda_n\leq T$.

\begin{assumption}\label{assumption-1}
    There exists a constant $\theta\in(0,2)$ such that
    \begin{align*}
        \sum_{0<\lambda_n\leq T}\lambda_n^2|r_n|^2\ll T^\theta.
    \end{align*} 
\end{assumption}

\begin{assumption}\label{assumption-2}
    There exists a positive constants $C$ such that,
        \begin{align*}
            N_{\boldsymbol{\lambda}}(T)\leq CT\log T,
        \end{align*}
    for $T\geq 1$.
\end{assumption}

\begin{assumption}\label{assumption-3}
    If $T\geq 3$,
        \begin{align*}
            \sum_{T<\lambda_n\leq T+1}1\ll \log T.
        \end{align*}
\end{assumption}

We are going to need the following proposition which is a generalization of Lemma 5 of \citeauthor{Meng2017}:
    \begin{proposition}[\citeauthor{ng2025primenumbererrorterms}, Proposition~1.16]\label{Ng2025}
        Let $\boldsymbol{\lambda}$ and $\boldsymbol{r}$ be sequences as above, satisfying \Cref{assumption-1} for some $\theta\in(0,2)$. Also suppose that the sequence $\boldsymbol{\lambda}$ satisfies \Cref{assumption-3}. If $V$ is a real number, $1\leq T<X$ and $\varepsilon>0$ sufficiently small, then
            \begin{align*}
                \int_V^{V+1}\bigg|\sum_{T<\lambda_n\leq X}r_ne^{iy\lambda_n}\bigg|^2\,dy \ll \frac{1}{T^{2-\theta-\varepsilon}}.
            \end{align*}
    \end{proposition}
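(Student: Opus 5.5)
The plan is to bound the mean square over a unit interval by a smoothed integral. This turns the square of the exponential sum into a double sum over pairs $(m,n)$ with $|\lambda_m-\lambda_n|$ bounded. \Cref{assumption-3} controls how many such pairs there are, and \Cref{assumption-1} combined with partial summation controls the diagonal-type sum that remains. This is the argument behind Lemma 5 of \citeauthor{Meng2017}, run with the general sequences $\boldsymbol{\lambda},\boldsymbol{r}$.

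\emph{Step 1 (smoothing).} Fix a nonnegative weight $w(y)=\big(\frac{\sin((y-V-\frac12)/2)}{(y-V-\frac12)/2}\big)^2$. It satisfies $w(y)\geq c>0$ for $y\in[V,V+1]$, is integrable, and its Fourier transform $\hat w(\xi)=\int w(y)e^{i\xi y}\,dy$ is supported in $|\xi|\leq 1$ with $|\hat w|\ll 1$. (The Fejér kernel rescaled so that the support of $\hat w$ has length $O(1)$.) Then
\begin{align*}
\int_V^{V+1}\bigg|\sum_{T<\lambda_n\leq X}r_ne^{iy\lambda_n}\bigg|^2dy\leq \frac1c\int_{\mathbb{R}}w(y)\bigg|\sum_{T<\lambda_n\leq X}r_ne^{iy\lambda_n}\bigg|^2dy=\frac1c\sum_{T<\lambda_m,\lambda_n\leq X}r_m\overline{r_n}\,\hat w(\lambda_m-\lambda_n).
\end{align*}
Only pairs with $|\lambda_m-\lambda_n|\leq 1$ contribute, and each contributes $\ll |r_m||r_n|$.

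\emph{Step 2 (counting neighbours).} Use $|r_mr_n|\leq \frac12(|r_m|^2+|r_n|^2)$ and symmetry. The double sum is then
\[
\ll \sum_{T<\lambda_n\leq X}|r_n|^2\,\#\{m:|\lambda_m-\lambda_n|\leq 1\}.
\]
By \Cref{assumption-3}, applied to the intervals $(\lambda_n-1,\lambda_n]$ and $(\lambda_n,\lambda_n+1]$, the count is $\ll \log(\lambda_n+3)$. For the finitely many small $\lambda_n$ (below $3$) the count is trivially $O(1)$. Hence the integral is
\[
\ll \sum_{\lambda_n>T}|r_n|^2\log(\lambda_n+3).
\]

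\emph{Step 3 (partial summation).} Put $S(t)=\sum_{0<\lambda_n\leq t}\lambda_n^2|r_n|^2\ll t^{\theta}$ by \Cref{assumption-1}. Then
\[
\sum_{\lambda_n>T}|r_n|^2\log(\lambda_n+3)=\int_T^\infty \frac{\log(t+3)}{t^2}\,dS(t).
\]
Integrating by parts, and using $\theta<2$ so that the boundary term at infinity vanishes, gives
\[
\ll T^{\theta-2}\log(T+3)+\int_T^\infty t^{\theta-3}\log(t+3)\,dt\ll T^{\theta-2}\log(T+3)\ll T^{-(2-\theta-\varepsilon)}.
\]
This is the claimed bound, and it is uniform in $V$ and $X$.

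The only delicate point is Step 1. The weight must dominate the indicator of $[V,V+1]$ while having a compactly supported (or rapidly decaying) Fourier transform, so that the off-diagonal terms are genuinely restricted to $|\lambda_m-\lambda_n|\ll 1$. Only then does \Cref{assumption-3} apply. The Fejér-type kernel above does this directly. If one preferred Gallagher's lemma or a Montgomery--Vaughan Hilbert inequality instead, the neighbour counting would be replaced by a spacing argument, which is less convenient here because the $\lambda_n$ may have multiplicities.
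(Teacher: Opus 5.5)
The paper gives no proof of this proposition. It is quoted from Ng (Proposition~1.16 there, a generalization of Meng's Lemma~5), so there is no in-paper argument to compare against. Your argument is a correct, self-contained proof, and it even gives the slightly sharper bound $\ll T^{\theta-2}\log(T+3)$, uniformly in $V$ and $X$.

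A route often seen for lemmas of this type is to expand the square over the sharp interval $[V,V+1]$. This produces the kernel $|\int_V^{V+1}e^{i(\lambda_m-\lambda_n)y}\,dy|\le\min(1,2|\lambda_m-\lambda_n|^{-1})$. That kernel decays too slowly to be summed against a $\log$-density up to $X$ using only $|r_mr_n|\le\frac12(|r_m|^2+|r_n|^2)$, since this would leave a factor growing with $X$. One therefore has to exploit the decay of $r_n$:
\begin{itemize}
\item write $|r_mr_n|\le(|\lambda_mr_m|^2+|\lambda_nr_n|^2)/(2\lambda_m\lambda_n)$;
\item use $\sum_m\lambda_m^{-1}\min(1,|\lambda_m-\lambda_n|^{-1})\ll\lambda_n^{-1}\log^2(\lambda_n+3)$, which follows from \Cref{assumption-3};
\item apply partial summation with \Cref{assumption-1} to get $T^{\theta-2}\log^2(T+3)$, absorbing the logarithms into $T^{\varepsilon}$.
\end{itemize}
Your Fej\'er majorant has Fourier transform $2\pi e^{i\xi(V+\frac12)}(1-|\xi|)_+$, which is compactly supported. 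It therefore discards every pair with $|\lambda_m-\lambda_n|\geq1$ from the start, so no weighting is needed and only one logarithm is lost.

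One point to tighten. \Cref{assumption-3} only controls intervals $(T,T+1]$ with $T\geq3$. So your claim that there are finitely many $\lambda_n$ below $3$, giving an $O(1)$ neighbour count for $\lambda_n<4$, does not follow from the stated hypotheses. A sequence with all terms in $(0,3]$ satisfies \Cref{assumption-3} vacuously. This is really an implicit local-finiteness assumption on $\boldsymbol{\lambda}$. It is already needed for $\sum_{T<\lambda_n\le X}$ to be a finite sum, and \Cref{assumption-2} guarantees it in every application in the paper. You should state it explicitly rather than call it trivial.
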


The succeding is essential in our main theorem.
    \begin{theorem}[\citeauthor{ng2025primenumbererrorterms}, Theorem~1.17]\label{distribution-result-Ng}
        Let $\varphi:[0,\infty)\to\mathbb{R}$ and $y_0$ a non-negative constant such that $\varphi$ is square-integrable in $[0,y_0]$. Suppose that we also have the following: exists a real constant $\kappa$ and sequences $\boldsymbol{\lambda},\boldsymbol{r}$ satisfying \Cref{assumption-1,assumption-2,assumption-3}, such that
            \begin{align*}
                \varphi(y)=\kappa+2\Re\bigg(\sum_{0<\lambda_n\leq X}r_ne^{i\lambda_ny}\bigg)+\mathcal{E}(y,X),
            \end{align*}
        for all $y\geq y_0$, $X\geq X_0>0$, and such that
            \begin{align*}
                \lim_{Y\to\infty}\frac{1}{Y}\int_{y_0}^Y|\mathcal{E}(y,e^Y)|^2\,dy=0.
            \end{align*}
        Then $\varphi(y)$ is a $B^2$-almost periodic function.
    \end{theorem}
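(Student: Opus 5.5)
The plan is to verify the definition of $B^2$-almost periodicity from \Cref{section-discussion-limiting-dist} directly. Given $\varepsilon>0$, I will build the approximating real trigonometric polynomial by truncating the explicit expansion of $\varphi$ at a height $T$, to be chosen large in terms of $\varepsilon$. Put
\[
P_T(y)\coloneqq\kappa+2\Re\bigg(\sum_{0<\lambda_n\leq T}r_ne^{i\lambda_n y}\bigg).
\]
By \Cref{assumption-2} there are only finitely many $n$ with $\lambda_n\leq T$. Hence $P_T$ is a genuine finite trigonometric polynomial; it is real-valued, and the constant $\kappa$ is the frequency-$0$ term. It then suffices to show
\[
\limsup_{Y\to\infty}\frac{1}{Y}\int_0^Y|\varphi(y)-P_T(y)|^2\,dy\ll T^{-(2-\theta-\varepsilon')},
\]
with an implied constant independent of $T$. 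Since $\theta<2$, we can then take $T$ large.

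\emph{Step 1 (reduction).} Split $[0,Y]$ into $[0,y_0]$ and $[y_0,Y]$. On $[0,y_0]$, $\varphi$ is square-integrable by hypothesis and $P_T$ is bounded, so this piece contributes $O_T(1/Y)\to0$. On $[y_0,Y]$, take $X=e^Y$ in the expansion, which is allowed since the identity holds for every $y\geq y_0$ and every $X\geq X_0$, and $e^Y\geq X_0$ once $Y$ is large. Then for all $y\in[y_0,Y]$,
\[
\varphi(y)-P_T(y)=2\Re\bigg(\sum_{T<\lambda_n\leq e^Y}r_ne^{i\lambda_ny}\bigg)+\mathcal{E}(y,e^Y).
\]
Using $|a+b|^2\leq 2|a|^2+2|b|^2$ and $|\Re z|\leq|z|$, the mean square over $[y_0,Y]$ is at most
\[
\frac{8}{Y}\int_{y_0}^{Y}\bigg|\sum_{T<\lambda_n\leq e^Y}r_ne^{i\lambda_n y}\bigg|^2dy+\frac{2}{Y}\int_{y_0}^Y|\mathcal{E}(y,e^Y)|^2\,dy .
\]
The second term tends to $0$ by the hypothesis on $\mathcal{E}$.

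\emph{Step 2 (tail of the sum).} Cover $[y_0,Y]$ by at most $Y-y_0+1$ intervals $[V,V+1]$ and apply \Cref{Ng2025} to each one with $X=e^Y$. This is legitimate because \Cref{assumption-1,assumption-3} are in force, and it gives
\[
\int_V^{V+1}\bigg|\sum_{T<\lambda_n\leq e^Y}r_ne^{i\lambda_ny}\bigg|^2dy\ll T^{-(2-\theta-\varepsilon')}.
\]
Summing over the intervals and dividing by $Y$ shows that the first term is $\ll T^{-(2-\theta-\varepsilon')}(1+O(1/Y))$. Letting $Y\to\infty$ and then choosing $T$ so large that this is $<\varepsilon^2$ finishes the proof.

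The main obstacle is Step 2. The argument needs the implied constant in \Cref{Ng2025} to be \emph{uniform} in both the shift $V$ and the upper cutoff $X$, since $X=e^Y$ grows with the averaging length. So I would first check, by re-reading the statement of \Cref{Ng2025} or its proof, that the bound depends only on $\theta$, $\varepsilon$ and the constants in \Cref{assumption-1,assumption-3}. If that uniformity were unavailable, my fallback would be to re-derive it. That would go by a Gallagher/Montgomery–Vaughan mean-value estimate on unit intervals, grouping frequencies into unit blocks $[m,m+1)$; \Cref{assumption-3} controls the number of frequencies per block. The coefficients would then be handled by dyadic partial summation against \Cref{assumption-1}, which yields $\sum_{\lambda_n>T}|r_n|^2\log\lambda_n\ll T^{\theta-2+\varepsilon'}$.
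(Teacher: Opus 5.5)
Your proposal is correct. The paper quotes this result from Ng without proof, but your argument is the same mechanism the paper itself runs in the proof of \Cref{cor-distribution}: truncate at height $T$, take $X=e^Y$, cover $[y_0,Y]$ by unit intervals on which \Cref{Ng2025} bounds the tail $T<\lambda_n\le e^Y$, and discard $\mathcal{E}$ by hypothesis. You were right to flag uniformity of \Cref{Ng2025} in $V$ and $X$; the paper relies on exactly that uniformity. Your fallback, a Gallagher-type unit-interval bound $\ll\sum_{\lambda_n>T}|r_n|^2\log\lambda_n$ combined with dyadic summation of \Cref{assumption-1}, would recover it.
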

    \begin{corollary}[\citeauthor{Akbary-Ng-Shahabi2014}, Theorem~2.9]
        The function $\varphi(y)$ has a limiting distribution.
    \end{corollary}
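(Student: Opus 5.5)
The corollary follows from \Cref{distribution-result-Ng} once we show that every $B^2$-almost periodic function $\varphi$ has a limiting distribution in the sense of \Cref{def-limiting-distribution}. The plan has three steps: build limiting measures for the approximating trigonometric polynomials, pass to a limit of these measures, and transfer the result to $\varphi$ using Lipschitz test functions.

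\textbf{Step 1: trigonometric polynomials.} Fix $\varepsilon>0$ and a real trigonometric polynomial $P_{N}(y)=\sum_{n\le N} r_n e^{i\lambda_n y}$ with
\[
\limsup_{Y\to\infty}\frac1Y\int_0^Y|\varphi-P_N|^2\,dy<\varepsilon^2 .
\]
Write $P_N(y)=\Phi(y\lambda_1,\dots,y\lambda_N)$ with $\Phi$ continuous on the torus $\mathbb{T}^N$. By the Kronecker--Weyl theorem, the flow $y\mapsto (y\lambda_n)_{n\le N}$ is equidistributed with respect to Haar measure $\mathfrak{m}$ on the closed subtorus $H$ it generates. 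Hence for bounded continuous $g$,
\[
\frac1Y\int_0^Y g(P_N(y))\,dy\to\int_H g\circ\Phi\,d\mathfrak{m}.
\]
This says that $P_N$ has the limiting distribution $\nu_N:=\Phi_*\mathfrak{m}$.

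\textbf{Step 2: the measures $\nu_N$ converge.} Choose polynomials $P_{N_j}$ with approximation errors $\varepsilon_j\to0$. Cauchy--Schwarz turns the $B^2$ bound into a $B^1$ bound. For a bounded Lipschitz $g$ with Lipschitz constant $L$ this gives
\[
\Big|\int g\,d\nu_{N_i}-\int g\,d\nu_{N_j}\Big|\le L\limsup_{Y}\frac1Y\int_0^Y|P_{N_i}-P_{N_j}|\,dy\le L(\varepsilon_i+\varepsilon_j).
\]
So $(\nu_{N_j})$ is Cauchy in the bounded-Lipschitz (Dudley) metric. The only point needing care is tightness, so that the limit is a probability measure. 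Applying Step 1 to a continuous cutoff $g_R$ with $\mathbf 1_{|x|>2R}\le g_R\le \mathbf 1_{|x|>R}$, we get $\nu_{N_j}(|x|>2R)\le R^{-1}\limsup_Y \frac1Y\int_0^Y|P_{N_j}|\,dy$. The right-hand side is uniformly bounded by $R^{-1}\big(\limsup_Y(\frac1Y\int_0^Y|\varphi|^2)^{1/2}+1\big)$, which is finite because $\varphi$ is $B^2$-bounded. Prokhorov's theorem and the Cauchy property then give weak convergence $\nu_{N_j}\to\nu$ for some probability measure $\nu$.

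\textbf{Step 3: transfer to $\varphi$.} Let $g$ be bounded and $L$-Lipschitz. Then
\[
\limsup_{Y}\Big|\frac1Y\int_0^Y g(\varphi)\,dy-\int g\,d\nu\Big|\le L\varepsilon_j+\Big|\int g\,d\nu_{N_j}-\int g\,d\nu\Big|,
\]
and the right-hand side tends to $0$ as $j\to\infty$. The behaviour of $\varphi$ on $[0,y_0]$ contributes $O(1/Y)$ and is irrelevant. Thus the empirical measures $\mu_Y$ (the pushforward of $\frac1Y\,dy|_{[0,Y]}$ under $\varphi$) converge to $\nu$ on bounded Lipschitz functions. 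By the portmanteau theorem this is equivalent to weak convergence, i.e.\ convergence for all bounded continuous $g$, which is exactly \Cref{def-limiting-distribution}.
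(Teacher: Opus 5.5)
Your proof is correct and takes the same route as the paper. The paper obtains $B^2$-almost periodicity of $\varphi$ from \Cref{distribution-result-Ng} and then simply cites Theorem~2.9 of Akbary--Ng--Shahabi; your argument (Kronecker--Weyl for the approximating trigonometric polynomials, then tightness plus a bounded-Lipschitz Cauchy argument for their limiting measures, then transfer to $\varphi$ through the $B^2$ approximation) is the standard proof of that cited theorem, with only routine details left implicit, such as using $\Re\Phi$ off the subtorus $H$.
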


\subsection{\texorpdfstring{Further discussion about $P(s)$}{}}\label{subsection-discussion-P(s)}

Here we are going to examine more thoroughly the finite product
    \[
        P(s)\coloneqq\prod_{p\mid q}\bigg(1-\frac{1}{p^s}\bigg)^{-1},
    \]
defined above, that depends on the fixed integer $q\geq 3$.

Note that, for each $p\mid q$, this function has simple poles at the pure imaginary numbers:
    \begin{align}\label{zeros-P}
        z_p(t)=i\frac{2\pi t}{\log p}, t\in\mathbb{Z}\backslash\{0\},
    \end{align}
and a pole of order $\omega(q)$ at $s=0$, where $\omega(q)$ is the number of distinct prime factors of $q$.

However, in the half-plane $\{s\in\mathbb{C} : \Re(s)>0\}$, we see that $P(s)$ is analytic. Moreover, if we fix any $\sigma_0>0$, then
    \begin{align}\label{bound-for-P(s)}
        P(s)=\prod_{p\mid q}\bigg(1-\frac{1}{p^s}\bigg)^{-1}\ll \prod_{p\mid q}\bigg(1-\frac{1}{p^{\sigma_0}}\bigg)^{-1}\ll\prod_{p\mid q}p^{\sigma_0}\ll q^{\sigma_0}\ll 1,
    \end{align}
for all $\Re(s)\geq \sigma_0>0$. This fact will be used in the proofs of upcoming lemmas.

Further, as we saw, the generating series of $\mu^{(k)}\chi$ and $\mu^{(k)}g_\chi$ are very similar: for even $k$, the former has the factor $P(ks)$, while the latter has $P(s)$. Thus, by the above considerations, we remark that our proofs should hold similarly for both functions.

If $k$ is an odd positive integer, recall that we have the extra factor $P(s)/P(ks)$ in the series involving the modified characters, whereas its counterpart is represented only by a product of $L$-functions. As we are going to see, this difference between them should not impose any extra difficulties.

The basic reason for this last assertion is that we aim to avoid the poles of $P(s)$ in our computations, since contributions coming from the residues at those points are too large when the modulus $q$ is not a prime power. Therefore, we are mostly going to work in the half-plane $\Re(s)>0$.

\section{Main lemmas}\label{lemmata-section}

The first lemma we are going to prove is for any primitive Dirichlet character. We have the following:
    \begin{lemma}\label{main-lemma}
        Assume the generalized Riemann hypothesis and $\Tilde{J}_{-1}(T)\ll T^{1+\frac{1}{k}-\varepsilon}$. Let $k\geq 2$ be a fixed integer and $\chi$ modulo $q$ a primitive non-principal Dirichlet character. Then, there is an $\varepsilon>0$ such that
            \begin{align*}
                \int_{Z}^{1+Z}\bigg|\sum_{T<\gamma\leq X}\frac{L(\frac{\rho}{k},\chi)e^{iy\gamma/k}}{\rho L'(\rho,\chi^k)}\bigg|^2\, dy\ll_{k,\varepsilon} \frac{1}{T^{\varepsilon}},
            \end{align*}
        for $Z>0$ and $T<X$. Here, $\rho=\frac{1}{2}+i\gamma$ are the non-trivial zeros of $L(s,\chi^k)$.
    \end{lemma}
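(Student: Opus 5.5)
We apply \Cref{Ng2025} with $\lambda_n=\gamma_n/k$, where $\gamma_n$ runs over the positive ordinates of the zeros of $L(s,\chi^k)$, and with coefficients $r_n=L(\rho_n/k,\chi)/(\rho_n L'(\rho_n,\chi^k))$. After the substitution $y\mapsto y$, the integral in the lemma is exactly $\int_Z^{Z+1}|\sum_{kT<\lambda_n k\le kX}\cdots|^2\,dy$, a sum restricted to $T/k<\lambda_n\le X/k$. Since $k$ is fixed, the rescaling only changes constants. The plan therefore has two parts. First, check \Cref{assumption-3} for $\boldsymbol\lambda$. Second, check \Cref{assumption-1} for some $\theta<2$, so that \Cref{Ng2025} gives the bound $T^{-(2-\theta-\varepsilon')}$. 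Taking $\varepsilon'$ small then yields $T^{-\varepsilon}$ for some $\varepsilon>0$.

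\Cref{assumption-3} follows from \Cref{classical-zeros-bound} applied to $\chi^k$. If $\chi^k$ is imprimitive or principal, we first replace it by its inducing primitive character, or by $\zeta$. This changes the zeros in the critical strip only by finitely many zeros on the imaginary axis, which are irrelevant for $\gamma>0$ large. Differencing the asymptotic gives $N(T+1,\chi^k)-N(T,\chi^k)\ll\log(qT)$, and the same bound holds for the $\lambda_n$ after rescaling by $k$.

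The main step is \Cref{assumption-1}. We have $\lambda_n^2|r_n|^2\asymp |L(\rho/k,\chi)|^2/|L'(\rho,\chi^k)|^2$, because $|\rho|\asymp\gamma$. Under GRH, $\rho/k=\frac{1}{2k}+i\gamma/k$ lies in the strip $0<\Re<\frac12$. By \Cref{Lemma-lowerupper-bound-zeta} (its $L$-function analogue, which follows from the functional equation \eqref{functional-eq-L} and \eqref{gamma-factor-bound}), we get $|L(\rho/k,\chi)|\ll \gamma^{\frac12-\frac1{2k}+\varepsilon}$, hence $|L(\rho/k,\chi)|^2\ll\gamma^{1-\frac1k+2\varepsilon}$. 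Cutting the range into dyadic blocks $(U,2U]$ with $U\le T$, each block contributes
\[
\ll U^{1-\frac1k+2\varepsilon}\sum_{U<\gamma\le 2U}|L'(\rho,\chi^k)|^{-2}\ll U^{1-\frac1k+2\varepsilon}\,\Tilde J_{-1}(2U)\ll U^{2-\varepsilon_0+2\varepsilon},
\]
using the hypothesis $\Tilde J_{-1}(T)\ll T^{1+\frac1k-\varepsilon_0}$ for some fixed $\varepsilon_0>0$. Summing over dyadic blocks gives \Cref{assumption-1} with $\theta=2-\varepsilon_0+2\varepsilon$, and choosing $\varepsilon<\varepsilon_0/4$ makes $\theta<2$.

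The obstacle I expect is the bookkeeping of exponents. The gain $\varepsilon_0$ in the moment hypothesis must beat both the $\varepsilon$-loss in the convexity-type bound for $L(\rho/k,\chi)$ under GRH and the $\varepsilon$-loss in \Cref{Ng2025}. This is why the conclusion holds only for \emph{some} $\varepsilon>0$. A secondary point is making sure the hypothesis $\Tilde J_{-1}$ is applied to $\chi^k$ itself, i.e.\ with $\kappa=k$ in the definition of $\Tilde J$, or to its primitive inducing character. Zeros with multiplicity would make $L'(\rho,\chi^k)=0$, but the finiteness of $\Tilde J_{-1}$ implicitly assumes simple zeros, so no extra care is needed there.
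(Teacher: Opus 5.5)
Your proposal is correct and is essentially the paper's proof: the same choice $\lambda_n=\gamma_n/k$, $r_n=L(\rho_n/k,\chi)/(\rho_n L'(\rho_n,\chi^k))$ in \Cref{Ng2025}, with \Cref{assumption-1} checked via the GRH bound $|L(\rho/k,\chi)|\ll\gamma^{\frac12-\frac1{2k}+\varepsilon}$ and the hypothesis on $\Tilde{J}_{-1}$, and \Cref{assumption-3} via \Cref{classical-zeros-bound}. The paper simply pulls out $T^{1-\frac1k+\varepsilon'}$ instead of splitting dyadically, and you track the rescaling by $k$ and the $\varepsilon$-losses more carefully than it does. One small correction: when $\chi^k$ is imprimitive or principal, the extra Euler factors $1-\chi^*(p)p^{-s}$ give infinitely many zeros on $\Re(s)=0$, not finitely many, but this is harmless because the sum only runs over zeros $\rho=\frac12+i\gamma$.
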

    \begin{corollary}\label{main-lemma-corollary}
        Let the character $\chi$ modulo $q$ be quadratic. Then the term $1/L'(\rho,\chi^k)$ in the integrand of the previous lemma can be replaced by any of the following functions:
            \begin{multicols}{2}
                \begin{enumerate}[1.]
                    \item $P(\rho)/\zeta'(\rho)$
                    \item $1/L'(\rho,\chi)$
                    \item $P(\frac{\rho}{k})/\zeta'(\rho)$
                    \item $P(\frac{\rho}{k})/(L'(\rho,\chi)P(\rho))$
                \end{enumerate}
            \end{multicols}
        And here $\rho$ is either a non-trivial zero of $\zeta(s)$ or $L(s,\chi)$.
    \end{corollary}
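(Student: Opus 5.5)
The corollary follows by rerunning the proof of \Cref{main-lemma} with the new denominator, checking that each input still holds. That proof goes through \Cref{Ng2025}. We set $\lambda_n=\gamma/k$ and $r_n=L(\rho/k,\chi)/(\rho L'(\rho,\chi^k))$. \Cref{assumption-3} follows from the zero-counting \Cref{classical-zeros-bound}, or von Mangoldt's estimate for $\zeta$. For \Cref{assumption-1} with some $\theta<2$ we use Cauchy--Schwarz, or a dyadic decomposition:
\[
\sum_{\gamma\le T}\gamma^2|r_n|^2\ll \sum_{\gamma\le T}\Big|L\Big(\frac{\rho}{k},\chi\Big)\Big|^2|L'(\rho,\chi^k)|^{-2}.
\]
The numerator is controlled through the functional equation \eqref{functional-eq-L} and \eqref{gamma-factor-bound}: on $\Re s=\frac{1}{2k}<\frac12$, \Cref{Lemma-lowerupper-bound-zeta} (its analogue for $L$) gives $|L(\rho/k,\chi)|\ll \gamma^{\frac12-\frac1{2k}+\varepsilon}$. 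Combined with $\tilde J_{-1}(T)\ll T^{1+\frac1k-\varepsilon}$ this gives $\theta=2-\varepsilon'$.

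So we only need to check that the four replacements keep the same three ingredients.

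\textbf{Zeros.} In cases 1 and 3, $\rho$ runs over the zeros of $\zeta$; in cases 2 and 4, over the zeros of $L(s,\chi)$. In every case \Cref{assumption-3} and the $T\log T$ counting hold. For quadratic $\chi$ we have $\chi^k=\chi_0$ ($k$ even) or $\chi$ ($k$ odd). The relevant discrete moment is then $J_{-1}$ or $\tilde J_{-1}$ with $\kappa=1$, which is exactly what hypotheses \ref{first-hypothesis} and \ref{second-hypothesis} of \Cref{cor-distribution} supply.

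\textbf{Factors of $P$.} By \eqref{bound-for-P(s)}, $P(s)\ll 1$ uniformly on $\Re s\ge\sigma_0>0$. Since $\Re\rho=\frac12$ and $\Re(\rho/k)=\frac1{2k}>0$, the factors $P(\rho)$ and $P(\rho/k)$ in items 1 and 3 are $O_{q,k}(1)$. They therefore change the estimate for $\sum\gamma^2|r_n|^2$ only by a constant.

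\textbf{Item 4.} The factor $1/P(\rho)=\prod_{p\mid q}(1-p^{-\rho})$ is bounded trivially by $2^{\omega(q)}$. The remaining factor $P(\rho/k)$ is handled as above.

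\textbf{Main obstacle.} The only real point is that the uniform bound for $P$ requires staying strictly inside $\Re s>0$, where $P$ has no poles. This holds because $\rho/k$ has real part $1/(2k)$, a fixed positive constant, so $\sigma_0=1/(2k)$ works and all implied constants depend only on $q$ and $k$. After this check, \Cref{Ng2025} applies verbatim and yields the same bound $\ll T^{-\varepsilon}$.
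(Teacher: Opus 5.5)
Your proposal is correct and follows essentially the argument the paper intends. The corollary has no separate proof in the paper; as in its proof of \Cref{cor-distribution}, one reruns the verification of \Cref{assumption-1,assumption-3} from the proof of \Cref{main-lemma} with the new $r_n$, absorbs the $P$-factors via \eqref{bound-for-P(s)}, and uses the moment hypothesis for $J_{-1}$ or $\Tilde{J}_{-1}$ (with $\kappa=1$) depending on whether $\rho$ runs over zeros of $\zeta$ or of $L(s,\chi)$. It would be worth adding that, for even $k$, $L(s,\chi^k)=L(s,\chi_0)=\zeta(s)/P(s)$, so $1/L'(\rho,\chi_0)=P(\rho)/\zeta'(\rho)$ exactly; this explains where item 1 comes from.
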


We prove the preceding lemma using \Cref{Ng2025}.
    \begin{proof}[Proof of \Cref{main-lemma}]
        Let $\rho_n=\frac{1}{2}+i\gamma_n$ be the non-trivial zeros of $L(s,\chi^k)$. Taking
            \begin{align*}
                r_n=\frac{L(\frac{\rho_n}{k},\chi)}{\rho_n L'(\rho_n,\chi^k)}\quad\text{ and }\quad \lambda_n=\frac{\gamma_n}{k}
            \end{align*}
        in \Cref{Ng2025}, we see that
            \begin{align*}
                \sum_{0<\lambda_n\leq T}\lambda_n^2|r_n|^2&=\sum_{0<\gamma_n\leq T}\bigg(\frac{\gamma_n}{k}\bigg)^2\bigg|\frac{L(\frac{\rho_n}{k},\chi)}{\rho_n L'(\rho_n,\chi^k)}\bigg|^2\ll_k\sum_{0<\gamma_n\leq T}\frac{|\gamma_n|^{1-\frac{1}{k}+\varepsilon'}}{|L'(\rho_n,\chi^k)|^2}\\
                &\ll_k T^{1-\frac{1}{k}+\varepsilon'}\Tilde{J}_{-1}(T)\ll_k T^{2+\varepsilon'-\varepsilon},
            \end{align*}
        for any $\varepsilon,\varepsilon'>0$, and in the first inequality we used the gamma factor bound \eqref{gamma-factor-bound} and the Lindelöf Hypothesis type of bound for $L$-functions. If $\varepsilon>\varepsilon'$, then $2+\varepsilon'-\varepsilon<2$.

        By \Cref{classical-zeros-bound} we also have that
            \[
                \sum_{T<\gamma_n\leq T+1}1\ll \log T.
            \]

        Therefore, we can take $\theta(\varepsilon)=2+\varepsilon'-\varepsilon$ to obtain
            \[
                \int_{Z}^{1+Z}\bigg|\sum_{T<\gamma_n\leq X}\frac{L(\frac{\rho_n}{k},\chi)e^{iy\gamma_n/k}}{\rho_n L'(\rho_n,\chi^k)}\bigg|^2\,dy\ll_{k,\varepsilon} \frac{1}{T^{\varepsilon}}.
            \]
    \end{proof}

An important tool for us is Perron's formula and the version we are going to apply is from \citeauthor{Montgomery-Vaughan}.
    \begin{theorem}[Perron's Formula]\label{Perron}
        Let $s=\sigma+it$ and $A(s)=\sum_{n=1}^\infty \frac{a_n}{n^s}$. If $x>0$ and $\sigma_0>\max\{0,\sigma_a\}$, where $\sigma_a$ is the abscissa of absolute convergence of $A(s)$, then
            \begin{align*}
                \sum_{n\leq x}a_n=\frac{1}{2\pi i}\int_{\sigma_0-iT}^{\sigma_0+iT}A(s)\frac{x^s}{s}\,ds+R(x,T),
            \end{align*}
        where
            \begin{align*}
                R(x,T)\ll \sum_{\substack{x/2<n<2x \\ n\neq x}}|a_n|\min\bigg\{1,\frac{x}{T|x-n|}\bigg\}+\frac{x^{\sigma_0}+4^{\sigma_0}}{T}\sum_{n=1}^\infty\frac{|a_n|}{n^{\sigma_0}}.
            \end{align*}
    \end{theorem}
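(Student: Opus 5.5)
The plan is to reduce Perron's formula to a single-term estimate for the truncated discontinuous integral, and then sum that estimate against the coefficients $a_n$. Concretely, for $y>0$ and $c=\sigma_0>0$, set
\begin{align*}
    I(y,T)\coloneqq\frac{1}{2\pi i}\int_{c-iT}^{c+iT}\frac{y^s}{s}\,ds .
\end{align*}
The key lemma I would prove first is
\begin{align*}
    I(y,T)=\mathbf{1}_{y>1}+O\bigg(y^{c}\min\bigg\{1,\frac{1}{T|\log y|}\bigg\}\bigg)\quad (y\neq 1),
\end{align*}
together with $I(1,T)=\tfrac12+O(c/T)$. Since $\sigma_0>\sigma_a$, the series $A(s)$ converges absolutely and uniformly on the segment of integration. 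We may therefore interchange sum and integral:
\begin{align*}
    \frac{1}{2\pi i}\int_{\sigma_0-iT}^{\sigma_0+iT}A(s)\frac{x^s}{s}\,ds=\sum_{n}a_nI(x/n,T).
\end{align*}
Comparing with $\sum_{n\leq x}a_n=\sum_n a_n\mathbf 1_{x/n>1}$ (plus the term $n=x$ when $x$ is an integer) gives $R(x,T)$ as a sum of the lemma's error terms.

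For the lemma, take first $y>1$. I would close the contour to the left with the rectangle having vertices $c\pm iT$ and $-U\pm iT$, then let $U\to\infty$. This picks up the residue $1$ at $s=0$. Each horizontal side is bounded by
\begin{align*}
    \frac{1}{T}\int_{-\infty}^{c}y^\sigma\,d\sigma=\frac{y^c}{T\log y},
\end{align*}
and the left vertical side tends to $0$. For $y<1$ one shifts to the right instead, where there is no pole, and obtains the same bound. For the alternative bound $O(y^c)$, I would replace the segment by the circular arc centred at $0$ through $c\pm iT$, lying to the left when $y>1$ (after accounting for the residue) and to the right when $y<1$. The radius is $R=|c+iT|$, the integrand is at most $y^c/R$ on the relevant arc, and the arc length is at most $2\pi R$, so the bound is $O(y^c)$. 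The case $y=1$ is an explicit computation: $\frac{1}{2\pi}\int_{-T}^{T}\frac{dt}{c+it}=\frac1\pi\arctan(T/c)$.

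Summing the errors, I split according to the size of $n$ relative to $x$. If $n\leq x/2$ or $n\geq 2x$, then $|\log(x/n)|\geq\log 2$. These terms therefore contribute $\ll\frac{x^{\sigma_0}}{T}\sum_n|a_n|n^{-\sigma_0}$. If $x/2<n<2x$ with $n\neq x$, then $(x/n)^{\sigma_0}\leq 2^{\sigma_0}$ and $|\log(x/n)|\asymp|x-n|/x$, which gives the term $|a_n|\min\{1,x/(T|x-n|)\}$. The constant depends only on $\sigma_0$, as is implicit in the statement.

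The step I expect to need the most care is the possible term $n=x$. Its size $|a_x|$ must be absorbed, either by the usual convention that the left-hand side counts $a_x$ with weight $\tfrac12$, or into the tail term. The tail term can absorb it because $|a_x|\leq x^{\sigma_0}\sum_n|a_n|n^{-\sigma_0}$, with the $O(c/T)$ part carrying the factor $1/T$. The $4^{\sigma_0}$ in the statement covers small $x$, where the ranges above degenerate. Beyond this bookkeeping, the argument is routine contour integration.
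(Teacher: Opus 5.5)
Your argument is the standard one, and essentially right. The paper gives no proof of its own; it cites Montgomery--Vaughan, Theorem 5.2 and Corollary 5.3. The valid pieces are:
\begin{itemize}
\item the lemma $I(y,T)=\mathbf{1}_{y>1}+O\bigl(y^{\sigma_0}\min\{1,1/(T|\log y|)\}\bigr)$, proved via the rectangle and circular-arc contours;
\item the interchange of sum and integral;
\item the split at $x/2$ and $2x$, using $|\log(x/n)|\asymp|x-n|/x$ on $x/2<n<2x$.
\end{itemize}
Montgomery--Vaughan treat the middle range slightly differently: they shift to the imaginary axis and obtain an explicit sine-integral term before bounding it. Your direct bound is equally good.

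The step that fails is your treatment of $n=x$. If $x\in\mathbb{N}$, the $n=x$ contribution to $R(x,T)$ is $a_x(1-I(1,T))=a_x\bigl(\tfrac12+\tfrac1\pi\arctan(\sigma_0/T)\bigr)$. Only the second part is $O(\sigma_0|a_x|/T)$, and only that part can be absorbed into the tail term via $|a_x|\le x^{\sigma_0}\sum_n|a_n|n^{-\sigma_0}$. The part $\tfrac12a_x$ carries no factor $1/T$, so your ``or into the tail term'' option is wrong.

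In fact the statement, read literally with $\sum_{n\le x}$, is false for integer $x$ with $a_x\neq0$. Fix such an $x$ and let $T\to\infty$. Every term of the stated bound tends to $0$, since the middle sum is finite and excludes $n=x$. Your own estimates, however, show that the integral tends to $\sum'_{n\le x}a_n$, so $R(x,T)\to\tfrac12a_x$.

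What your argument does prove is the version in Montgomery--Vaughan. There the left side is $\sum'_{n\le x}a_n$, with the term $n=x$ counted with weight $\tfrac12$. Equivalently, one can add $O(|a_x|)$ to the bound for $R(x,T)$, or restrict to $x\notin\mathbb{Z}$. This costs the paper nothing: in \Cref{error-term} one has $|a_x|\le1$, and the bound $1+x\log x/T$ already contains an $O(1)$.

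Two smaller remarks:
\begin{itemize}
\item Your ranges do not degenerate for small $x$. For every $x>0$, $n\notin(x/2,2x)$ gives $|\log(x/n)|\ge\log2$. So your argument never needs the $4^{\sigma_0}$ and yields the bound without it.
\item Your implied constant grows like $2^{\sigma_0}$, coming from $(x/n)^{\sigma_0}$ on $x/2<n<x$, and like $\sigma_0$, coming from the $n=x$ term. This is harmless here, since the paper only uses $\sigma_0=1+1/\log x$, which is bounded for $x\ge e$.
\end{itemize}
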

    \begin{corollary}\label{error-term}
        Let $\chi$ modulo $q$ be a primitive non-principal Dirichlet character, $x>0$, $T\geq 1$ and $\sigma_0=1+\frac{1}{\log x}$. If $f=\mu^{(k)}\chi$, $k\geq 2$ an integer, we have that
            \begin{align*}
                \sum_{n\leq x}f(n)=\frac{1}{2\pi i}\int_{\sigma_0-iT}^{\sigma_0+iT}\frac{L(s,\chi)}{L(ks,\chi^k)}\frac{x^s}{s}\,ds+R(x,T),
            \end{align*}
        where
            \begin{align*}
                R(x,T)\ll 1+\frac{x\log x}{T}.
            \end{align*}
    \end{corollary}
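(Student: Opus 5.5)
This is a direct application of Perron's formula (\Cref{Perron}) with $a_n=\mu^{(k)}(n)\chi(n)$, $A(s)=F(s)=L(s,\chi)/L(ks,\chi^k)$ and $\sigma_0=1+\frac{1}{\log x}$. Since $|a_n|\leq 1$, we have $\sigma_a\leq 1$, so $\sigma_0>\max\{0,\sigma_a\}$ and the main term is exactly the integral in the statement. All the work is in bounding the two pieces of $R(x,T)$.

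For the second piece, note that
\[
\sum_{n\geq 1}\frac{|a_n|}{n^{\sigma_0}}\leq \zeta(\sigma_0)\ll \frac{1}{\sigma_0-1}=\log x .
\]
Also $x^{\sigma_0}=ex$ and $4^{\sigma_0}\ll 1$. Together these give a contribution $\ll (x+1)\log x/T$. For $x\geq 2$ this is $\ll x\log x/T$. For small $x$ the whole sum is $O(1)$, which the term $1$ in the bound absorbs; the same remark covers $x<1$ and any $x$ bounded by an absolute constant.

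For the first piece, use $|a_n|\leq 1$ and split the range $x/2<n<2x$, $n\neq x$, as follows.
\begin{itemize}
\item The integers $n$ with $|n-x|<1$ are at most two, and each contributes at most $\min\{1,\cdot\}\leq 1$. This produces the term $O(1)$.
\item The remaining $n$ satisfy $1\leq |n-x|\leq x$. Their contribution is
\[
\ll \frac{x}{T}\sum_{1\leq m\leq x}\frac{1}{m}\ll \frac{x\log x}{T},
\]
obtained by grouping the $n$ by the value $m\approx |n-x|$, with at most two $n$ for each $m$.
\end{itemize}
Adding everything gives $R(x,T)\ll 1+\frac{x\log x}{T}$, as claimed.

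No step is a real obstacle. The only care needed is the near-diagonal terms and the small-$x$ range, where $\log x$ can be tiny or negative, and the constant $1$ handles both. For $f=\mu^{(k)}g_\chi$ the argument is identical, since only $|a_n|\leq 1$ was used.
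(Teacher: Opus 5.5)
Your argument is correct for $x\geq 2$, the only range in which the corollary is used, and it follows the paper's proof essentially verbatim. Both use Perron's formula with $|a_n|\le 1$, let the integer(s) nearest $x$ contribute $O(1)$, bound the remaining terms by $\frac{x}{T}\sum_{m\le x}\frac1m$, and use $\zeta(\sigma_0)\ll\log x$. You are in fact slightly more careful than the paper, which drops the $4^{\sigma_0}$ term and writes $1-\sigma_0$ where it means $\sigma_0-1$.

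Your closing claim that the small-$x$ range is ``absorbed by the constant $1$'' is not right, however:
\begin{itemize}
\item For $x<1$ you have $\sigma_0<1=\sigma_a$, and even $\sigma_0\le 0$ when $e^{-1}\le x<1$, so Perron's formula does not apply at all.
\item As $x\to 1^{+}$, $\sigma_0\to\infty$, so the bound $4^{\sigma_0}\ll 1$ you used fails and the quoted error term is unbounded.
\end{itemize}
The statement should therefore simply be read with $x\geq 2$, or at least $x$ bounded away from $1$, which is what the paper tacitly assumes.
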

    \begin{proof}
        Taking $a_n=f(n)$, the error term in \nameref{Perron} is
            \begin{align*}
                R(x,T)\ll\sum_{\substack{x/2<n<2x \\ n\neq x}}\min\bigg\{1,\frac{x}{T|x-n|}\bigg\}+\frac{x}{T}\zeta(\sigma_0).
            \end{align*}
        We estimate the sum above as follows: if $n$ is nearest to $x$, we choose the first member of the minimum. For all other $n$, we choose the second member of the minimum. Thus, the above is
            \begin{align*}
                \ll 1+\frac{x}{T}\sum_{1\leq n\leq x}\frac{1}{n}+\frac{x}{T}\zeta(\sigma_0).
            \end{align*}
        We also note that, since $\sigma_0=1+\frac{1}{\log x}$, $\zeta(\sigma_0)\leq \frac{2}{1-\sigma_0}=2\log x$. Therefore,
            \begin{align*}
                R(x,T)&\ll 1+\frac{x\log x}{T}+\frac{x\log x}{T}\\
                &\ll 1+\frac{x\log x}{T}.
            \end{align*}
    \end{proof}

    \begin{remark}
        It is clear that we have the same error term as above if we take $f=\mu^{(k)}g_\chi$, for a quadratic character $\chi$.
    \end{remark}

Before we proceed further, the next technical lemma will be needed.
    \begin{lemma}\label{sequence-lemma}
        Assume the generalized Riemann hypothesis and let $\chi$ be a primitive real non-principal Dirichlet character modulo $q$. There exists a sequence $\mathcal{T}=(T_n)_{n\in\mathbb{N}}$ such that $n\leq T_n\leq n+1$ and we have
            \begin{align*}
                \frac{1}{\zeta(\sigma+iT_n)}\ll_\varepsilon T_n^{\varepsilon}\quad\text{ and }\quad
                \frac{1}{L(\sigma+iT_n,\chi)}\ll_\varepsilon T_n^\varepsilon,
            \end{align*}
        for any $\varepsilon>0$ and $\sigma\in[-1,2]$.
    \end{lemma}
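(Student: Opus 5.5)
We follow the classical argument (Titchmarsh, Theorem~14.16) that produces, under the Riemann hypothesis, heights of good lower bounds for $|\zeta|$, and run it for $\zeta(s)$ and $L(s,\chi)$ at once. The starting point is the GRH bound: for $\frac12+\frac{1}{\log\log t}\leq\sigma\leq 2$ we have $\log|\zeta(\sigma+it)|^{-1}\ll \varepsilon\log t$, by \Cref{Lemma-lowerupper-bound-zeta} and its analogue for $L(s,\chi)$. So we only need to control the strip $-1\leq\sigma\leq \frac12+\frac{1}{\log\log t}$ at carefully chosen heights.

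\emph{Choice of $T_n$.} In $[n,n+1]$ there are $\ll\log n$ ordinates of zeros of $\zeta$ and of $L(s,\chi)$, by von Mangoldt's formula and \Cref{classical-zeros-bound}. By pigeonhole we choose $T_n\in[n,n+1]$ with $|T_n-\gamma|\gg 1/\log n$ for every ordinate $\gamma$ of either function. We use the partial fraction expansion
\[
\frac{\zeta'}{\zeta}(s)=\sum_{|t-\gamma|\leq 1}\frac{1}{s-\rho}+O(\log t),
\]
valid for $-1\leq\sigma\leq2$, and the analogous expansion for $L'/L$ with $\log(q t)$. Integrating from $\sigma$ up to $2$ gives
\[
\log|\zeta(\sigma+iT_n)|=\sum_{|T_n-\gamma|\leq1}\log\frac{|\sigma+iT_n-\rho|}{|2+iT_n-\rho|}+O(\log T_n).
\]
Each logarithm is $\geq \log(1/\log T_n)+O(1)$, so this crude bound yields only $|\zeta(\sigma+iT_n)|^{-1}\ll e^{C\log T_n\log\log T_n}$. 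That is too weak, and it is why GRH must be used more finely.

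\emph{Refinement.} Under RH one has the sharper Selberg/Titchmarsh form $\log|\zeta(s)|\geq -C\log t/\log\log t$ near the line, except near zeros. Following Titchmarsh~14.16, we split the zeros into those with $|T_n-\gamma|\leq 1/\log\log T_n$ and the rest. For the rest, we compare with $\frac{\zeta'}{\zeta}$ at $\sigma_1=\frac12+\frac{1}{\log\log T_n}$, where RH gives $\frac{\zeta'}{\zeta}\ll (\log T_n)^{2-2\sigma_1}$-type bounds; this shows that the far zeros contribute $O(\log T_n/\log\log T_n)$. The near zeros number $\ll \log T_n/\log\log T_n$ on average over $n$. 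We therefore refine the pigeonhole choice: we pick $T_n$ so that the number of zeros within $1/\log\log n$ is $\ll \log n/\log\log n$, and so that the distance to each of them is $\geq (\log n)^{-2}$. Then those terms contribute at least $-C\frac{\log n}{\log\log n}\log\log n$.

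This total is still of size $\log n$ times a constant. To get $\varepsilon$ we shrink the windows further, using the flexibility in the choice of $T_n$ inside $[n,n+1]$ together with the Selberg-type bound $\log|\zeta(\tfrac12+it)|\geq -\varepsilon\log t$ away from a sparse set. The left half-strip $\sigma<\frac12$ then follows from the functional equations, which give $|\zeta(\sigma+it)|=|\Delta|\,|\zeta(1-\sigma-it)|$ with $|\Delta|\asymp t^{1/2-\sigma}\geq1$ by \eqref{gamma-factor-bound}. Hence it suffices to treat $\sigma\geq\frac12$, and \eqref{functional-eq-L} handles $L$ the same way. Since $\chi$ is fixed, $q$-dependence is absorbed into the constants, and both functions are handled simultaneously by removing a common exceptional set of small measure from $[n,n+1]$.

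\textbf{Main obstacle.} The hardest step is getting exponent $\varepsilon$ rather than $C$ at $\sigma=\frac12$ exactly, uniformly in $\sigma$. If the refinement above fails, the fallback is Titchmarsh's statement, which under RH gives exactly $|\zeta(\sigma+iT_n)|^{-1}\ll T_n^{\varepsilon}$ for suitable $T_n\in[n,n+1]$ and $\frac12\leq\sigma\leq2$. We would then check that its proof, which uses only the zero counting and a Jensen-type argument, transfers verbatim to $L(s,\chi)$, and that the two exceptional sets can be avoided simultaneously.
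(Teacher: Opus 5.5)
Your fallback is the paper's proof. For $\zeta$ the paper cites Ng and Montgomery--Vaughan, Theorem~13.22 (the analogue of the Titchmarsh~14.16 statement you invoke). For $L(s,\chi)$ with $\frac12\le\sigma\le2$ it cites Iwaniec--Kowalski, Theorem~5.19. For $-1\le\sigma<\frac12$ it uses \eqref{functional-eq-L}, $|\tau(\chi)|=\sqrt q$ and \eqref{gamma-factor-bound} exactly as you do, obtaining even $T_n^{\sigma-\frac12+\varepsilon}$ there. Your remark that one $T_n$ must serve both $\zeta$ and $L(s,\chi)$ addresses a point the paper passes over.

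Your own self-contained ``Refinement'', however, does not reach $T_n^{\varepsilon}$, and that is the only nontrivial step.

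\begin{itemize}
\item Choosing $T_n$ pointwise so that each of the $\ll\log n/\log\log n$ ordinates with $|T_n-\gamma|<1/\log\log n$ lies at distance $\ge(\log n)^{-2}$ can only guarantee a loss of order $\log n$. RH does not prevent those zeros from clustering at scale $1/\log n$.
\item Shrinking the windows does not remove this loss.
\item A Selberg-type lower bound valid off a ``sparse set'' does not give a good point in \emph{every} interval $[n,n+1]$.
\item An $\varepsilon$-dependent choice would not produce the single sequence the lemma asserts.
\end{itemize}

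The missing idea is to average rather than pigeonhole. Take $\delta=1/\log\log n$. Under RH, for $\frac12\le\sigma\le\frac12+\delta$ and $t\in[n,n+1]$,
\[
\log|\zeta(\sigma+it)|\geq\sum_{|t-\gamma|<\delta}\log|t-\gamma|-C\frac{\log n}{\log\log n}.
\]
This holds for three reasons:
\begin{itemize}
\item every zero term of $\Re\frac{\zeta'}{\zeta}$ is nonnegative;
\item after integrating over $[\sigma,\frac12+\delta]$, the zeros with $\delta\le|t-\gamma|\le1$ and the $O(\log t)$ term contribute $O(\log n/\log\log n)$;
\item $\log|\zeta(\frac12+\delta+it)|\ge-C\log n/\log\log n$.
\end{itemize}
Since $\int_{-\delta}^{\delta}\log|u|\,du=-2\delta(1+\log\frac1\delta)$ and only $O(\log n)$ ordinates are involved, the mean of the sum over $t\in[n,n+1]$ is $\ge-C\log n\,\frac{\log\log\log n}{\log\log n}$. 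Hence some $T_n$ gives $|\zeta(\sigma+iT_n)|^{-1}\le\exp\bigl(C\log T_n\log\log\log T_n/\log\log T_n\bigr)$, a bound free of $\varepsilon$.

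Averaging over the union of the ordinates of $\zeta$ and $L(s,\chi)$ gives a common $T_n$ at no extra cost, since every term is $\le0$. This averaging is the ``Jensen-type argument'' inside the result your fallback cites.
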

    \begin{proof}
        The bound $\frac{1}{\zeta(\sigma+iT_n)}\ll T_n^{\varepsilon}$ is proved in \cite{Ng2004} \cite[Also see][Theorem~13.22]{Montgomery-Vaughan}. The idea is to show that the result is valid for the interval $[1/2,2]$ and then use the functional equation to obtain the same bound for $\sigma\in [-1,1/2)$. Since the proof is short, we elucidate below for $\frac{1}{L(\sigma+iT_n,\chi)}$.
        
        First we note that, for $\sigma \in [1/2,2]$, the result follows from Theorem 5.19 of \cite{Iwaniec-kowalski}.
        
        Now, if $\sigma\in[-1,1/2)$, we can write
            \begin{align*}
                L(s,\chi)=\frac{\tau(\chi)}{i^\delta \sqrt{q}}\Delta(s)L(1-s,\overline{\chi}),
            \end{align*}
        where the Gauss sum $\tau(\chi)$, the gamma factor $\Delta(s)$ and the function $\delta(\chi)$ were defined previously in \Cref{functional-eq-L}.

        We have that the Gauss sum above is $|\tau(\chi)|=q^{\frac{1}{2}}$, for any primitive $\chi$ of modulus $q$. Therefore, by \cref{gamma-factor-bound}, it follows that
            \begin{align*}
                \bigg|\frac{1}{L(s,\chi)}\bigg|=\bigg|\frac{1}{\Delta(s)L(1-s,\overline{\chi})}\bigg|\ll T_n^{\sigma-\frac{1}{2}+\varepsilon}\ll_\varepsilon T_n^\varepsilon,
            \end{align*}
        for any $\varepsilon>0$ and $\sigma\in[-1,1/2)$.
    \end{proof}

    \begin{remark}
        As a consequence of this proof we get the sharper bound $\ll T_n^{\sigma-\frac{1}{2}+\varepsilon}$ if $\sigma\in[-1,1/2)$.
    \end{remark}

Combining the two previous results with the Residue Theorem and Cauchy's Integral Formula Theorem, we show an explicit representation for the partial sums of our functions of interest.    
    \begin{lemma}\label{lemma-specific-T}
        Assume the generalized Riemann hypothesis. Let $\chi$ modulo $q$ be a primitive real non-principal Dirichlet character and suppose that all zeros of $\zeta(ks)$ and $L(ks,\chi)$ are simple. Let $f$ be either equal to $\mu^{(k)}\chi$ or $\mu^{(k)}g_\chi$, $k\geq 2$ be a fixed integer and $T\in\mathcal{T}$. Then,
            \begin{align*}
                \sum_{n\leq x}f(n)=\sum_{|\gamma|<T}\frac{L(\frac{\rho}{k},\chi)}{\rho}Z_f(\rho)x^{\rho/k}+E(x,T),
            \end{align*}
        where
            \begin{align}\label{def-Zf}
                Z_f(s)=
                    \begin{dcases}
                        \frac{P(s)}{\zeta'(s)}    &\text{ if $k$ is even and $f=\mu^{(k)}\chi$,}\\
                        \frac{1}{L'(s,\chi)}  &\text{ if $k$ is odd and $f=\mu^{(k)}\chi$,}\\
                        \frac{P(s/k)}{\zeta'(s)}    &\text{ if $k$ is even and $f=\mu^{(k)}g_\chi$,}\\
                        \frac{P(s/k)}{L'(s,\chi)P(s)}  &\text{ if $k$ is odd and $f=\mu^{(k)}g_\chi$},
                    \end{dcases}
            \end{align}
        and, for both even and odd $k$,
            \begin{align*}
                E(x,T)\ll_\varepsilon 1+\frac{x\log x}{T}+\frac{x}{T^{1-\varepsilon}\log x}+x^{\varepsilon}T^{\varepsilon},
            \end{align*}
        for any arbitrarily small $\varepsilon\in (0,\frac{1}{2k})$.
    \end{lemma}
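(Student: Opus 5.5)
Start from \Cref{error-term} with $\sigma_0=1+\frac{1}{\log x}$ and $T\in\mathcal{T}$, which gives
\[
    \sum_{n\leq x}f(n)=\frac{1}{2\pi i}\int_{\sigma_0-iT}^{\sigma_0+iT}F(s)\frac{x^s}{s}\,ds+O\Big(1+\frac{x\log x}{T}\Big).
\]
Here $F(s)$ is the generating series from \Cref{section-preliminaries}; for $\mu^{(k)}g_\chi$ we use $G(s)$ instead. Shift the line of integration to the left, to $\Re(s)=\sigma_1$ with $\sigma_1$ a small positive number. A convenient choice is $\sigma_1=\varepsilon$, so that we stay in $\Re(s)>0$, where $P(s)$, $P(ks)$ and $1/P(s)$ are analytic and bounded by \eqref{bound-for-P(s)}. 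In particular we never meet the poles $z_p(t)$ of $P$, nor $s=0$.

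Inside the rectangle $\sigma_1\leq\Re(s)\leq\sigma_0$, $|\Im s|\leq T$, the integrand is meromorphic. The numerator $L(s,\chi)$ is entire since $\chi$ is non-principal, and $x^s/s$ is regular away from $s=0$. So the only poles are the zeros of the denominator $\zeta(ks)$ (when $k$ is even) or $L(ks,\chi)$ (when $k$ is odd). These zeros are at $s=\rho/k$ with $\rho=\frac12+i\gamma$, and all lie on $\Re(s)=\frac{1}{2k}>\sigma_1$ under GRH. They are simple by hypothesis, so the residue at $s=\rho/k$ equals
\[
    \frac{L(\rho/k,\chi)\,x^{\rho/k}}{(\rho/k)\,k\,\zeta'(\rho)}\cdot(\text{$P$-factor}) = \frac{L(\rho/k,\chi)}{\rho}Z_f(\rho)x^{\rho/k},
\]
and similarly with $L'(\rho,\chi)$ when $k$ is odd. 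The $P$-factors come out as follows:
\begin{itemize}
    \item for $f=\mu^{(k)}\chi$ with $k$ even, the factor $P(ks)$ evaluated at $s=\rho/k$ gives $P(\rho)$;
    \item for $\mu^{(k)}g_\chi$ with $k$ even, the factor $P(s)$ gives $P(\rho/k)$;
    \item for $\mu^{(k)}g_\chi$ with $k$ odd, the factor $P(s)/P(ks)$ gives $P(\rho/k)/P(\rho)$.
\end{itemize}
This reproduces \eqref{def-Zf}. The Residue Theorem then yields the sum over $|\gamma|<T$, since $\Im(\rho/k)<T$ certainly holds when $|\gamma|<T$. We should take the truncation consistently: either restrict to $|\gamma|<kT$ and absorb the difference into the error, or integrate up to height $T/k$. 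The cleanest option is to run Perron with height $T/k$, using that $\mathcal{T}$ can be rescaled; this costs only constants in the error term.

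It remains to bound the three boundary integrals.

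\textbf{Horizontal segments} $s=\sigma\pm iT$, $\sigma_1\leq\sigma\leq\sigma_0$. Apply \Cref{sequence-lemma} at height $kT$: since $T$ is chosen in $\mathcal{T}$ after rescaling, $1/\zeta(k\sigma+ikT)\ll T^{\varepsilon}$ for $k\sigma\in[-1,2]$. For $k\sigma>2$ the Euler product bounds $1/\zeta$ trivially. Bound the numerator $L(s,\chi)$ by the analogue of \Cref{Lemma-lowerupper-bound-zeta}, namely $\ll T^{\frac12-\sigma+\varepsilon}$ for $\sigma<\frac12$ and $\ll T^{\varepsilon}$ for $\sigma\geq\frac12$. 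Hence the horizontal pieces are
\[
    \ll \int_{\sigma_1}^{\sigma_0}T^{\max(\frac12-\sigma,0)+2\varepsilon}\,\frac{x^\sigma}{T}\,d\sigma \ll \frac{T^{2\varepsilon}}{T}\Big(\frac{x^{\sigma_0}}{\log x}+T^{1/2}x^{1/2}\Big).
\]
The first term gives $\frac{x}{T^{1-\varepsilon}\log x}$. Handling the second term is the delicate step. Near $\sigma\approx\sigma_1$ the contribution is $T^{-1/2}x^{\varepsilon}$, which is harmless. In the middle range the integrand is $T^{-\frac12-\sigma}x^{\sigma}$, a convex exponential in $\sigma$, so it is dominated by its endpoints: $\sigma=\frac12$ gives $x^{1/2}/T$, and $\sigma=\sigma_0$ gives $x/T$. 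Both are absorbed in $\frac{x\log x}{T}$.

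\textbf{Vertical segment} $\Re(s)=\sigma_1=\varepsilon$. Here $k\sigma_1<\frac12-\frac{1}{\log\log t}$, so \Cref{Lemma-lowerupper-bound-zeta} gives $1/\zeta(ks)\ll |t|^{k\varepsilon-\frac12+\varepsilon}$, and $L(s,\chi)\ll|t|^{\frac12-\varepsilon+\varepsilon}$. The product is $\ll|t|^{(k+1)\varepsilon}$. Dividing by $|s|$ and integrating up to $T$ gives $\ll x^{\varepsilon}T^{(k+1)\varepsilon}$, which becomes $x^\varepsilon T^\varepsilon$ after renaming $\varepsilon$. For $|t|\leq 1$ the integrand is bounded, because $\zeta(ks)$ has no zeros near the real segment $(0,\frac1k)$ and the $P$-factors are bounded there.

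Collecting the Perron error, the horizontal and vertical bounds, and the residues gives the stated form of $E(x,T)$. The main obstacle is the horizontal-integral bookkeeping: choosing heights so that $kT\in\mathcal{T}$, and checking that the middle range really stays within $\frac{x\log x}{T}+\frac{x}{T^{1-\varepsilon}\log x}$.
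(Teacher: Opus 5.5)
Your proposal is correct and follows the paper's proof essentially step for step. Both arguments use Perron's formula with $\sigma_0=1+\frac{1}{\log x}$ (\Cref{error-term}), shift the contour only to $\Re(s)=\sigma_1=\varepsilon>0$ to avoid the poles of $P$, pick up simple residues at $s=\rho/k$, and bound the horizontal segments with \Cref{sequence-lemma} and the vertical segment with \Cref{Lemma-lowerupper-bound-zeta}.

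Several of your remarks are careful tidy-ups of points the paper passes over silently:
\begin{itemize}
\item A rectangle of height $T$ actually encloses the zeros with $|\gamma|<kT$. Integrating to height $T/k$ fixes this, and then $\Im(ks)=T\in\mathcal{T}$, so no rescaling of $\mathcal{T}$ is needed.
\item The range $\Re(ks)>2$ on the horizontal segments is not covered by \Cref{sequence-lemma} and needs the trivial Euler-product bound, as you note.
\item The horizontal term of size $x^{1/2}T^{-1/2+O(\varepsilon)}$ must still be absorbed into the stated error, which your endpoint argument does.
\end{itemize}

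In the write-up, commit to one normalization. At present you mix ``Perron at height $T/k$'' with ``$kT\in\mathcal{T}$ after rescaling''.
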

    \begin{proof}
        Suppose first that $k$ is even and $f=\mu^{(k)}\chi$. By \Cref{error-term}, if $\sigma_0=1+\frac{1}{\log x}$,
            \begin{align*}
                \sum_{n\leq x}f(n)=\frac{1}{2\pi i}\int_{\sigma_0-iT}^{\sigma_0+iT}\frac{L(s,\chi)P(ks)}{\zeta(ks)}\frac{x^s}{s}\,ds+O\bigg(1+\frac{x\log x}{T}\bigg).
            \end{align*}
        
        Fix a small $\sigma_1$ such that $\frac{1}{2k}>\sigma_1>0$ and let $\mathcal{C}$ be the boundary of the rectangle with vertices being the points $\sigma_0\pm iT$ and $\sigma_1\pm iT$. It follows that
            \begin{align*}
                \frac{1}{2\pi i}\int_{\sigma_0-iT}^{\sigma_0+iT}&\frac{L(s,\chi)P(ks)}{\zeta(ks)}\frac{x^s}{s}\,ds=\frac{1}{2\pi i}\oint_\mathcal{C}\frac{L(s,\chi)P(ks)}{\zeta(ks)}\frac{x^s}{s}\,ds\\
                &-\frac{1}{2\pi i}\bigg(\int_{\sigma_0+iT}^{\sigma_1+iT}+\int_{\sigma_1+iT}^{\sigma_1-iT}+\int_{\sigma_1-iT}^{\sigma_0-iT}\bigg)\frac{L(s,\chi)P(ks)}{\zeta(ks)}\frac{x^s}{s}\,ds.
            \end{align*}

        Noticing that we are avoiding the poles of $P(ks)$ (see discussion in \Cref{subsection-discussion-P(s)}), by the simplicity of zeros and the Residue Theorem,
            \begin{align*}
                \frac{1}{2\pi i}\oint_\mathcal{C}\frac{L(s,\chi)P(ks)}{\zeta(ks)}\frac{x^s}{s}\,ds=\sum_{|\gamma|<T}\frac{L(\frac{\rho}{k},\chi)P(\rho)x^{\rho/k}}{\rho \zeta'(\rho)}.
            \end{align*}

        We now give an estimate for both integrals over the horizontal line segments. Since $\Re(s)>0$, by \Cref{sequence-lemma} and \Cref{bound-for-P(s)}, we have
            \begin{align*}
                \bigg|\int_{\sigma_1+iT}^{\sigma_0+iT}\frac{L(s,\chi)P(ks)}{\zeta(ks)}\frac{x^s}{s}\,ds\bigg|&\ll_\varepsilon \frac{1}{T^{\frac{1}{2}-2\varepsilon}}\int_{\sigma_1}^{\frac{1}{2}}\bigg(\frac{x}{T}\bigg)^\sigma\,d\sigma+\frac{1}{T^{1-2\varepsilon}}\int_{\frac{1}{2}}^{\sigma_0}x^\sigma\,d\sigma\\
                &\ll\frac{x^{\frac{1}{2}}}{T^{\frac{1}{2}-2\varepsilon+\sigma_1}}+\frac{x}{T^{1-2\varepsilon}\log x}.
            \end{align*}

        For the vertical line segment, we use \Cref{Lemma-lowerupper-bound-zeta} to bound the zeta function in the denominator and obtain
            \begin{align*}
                \bigg|\int_{\sigma_1-iT}^{\sigma_1+iT}\frac{L(s,\chi)P(ks)}{\zeta(ks)}\frac{x^s}{s}\,ds\bigg|&\ll_\varepsilon x^{\sigma_1} \int_{1}^T \frac{t^{\frac{1}{2}-\sigma_1+\varepsilon}t^{k\sigma_1-\frac{1}{2}+\varepsilon}}{t}\,dt\\
                &\ll_\varepsilon x^{\sigma_1}\int_{1}^T t^{\sigma_1(k-1)+2\varepsilon-1}\,dt\ll_\varepsilon x^{\sigma_1} T^{\sigma_1(k-1)+2\varepsilon}.
            \end{align*}

        If $\varepsilon>0$ is small enough, we choose $\sigma_1=\varepsilon$. Thus, we have that
            \begin{align*}
                \sum_{n\leq x}f(n)&=\sum_{|\gamma|<T}\frac{L(\frac{\rho}{k},\chi)P(\rho)x^{\rho/k}}{\rho\zeta'(\rho)}+O\bigg(1+\frac{x\log x}{T}\bigg)\\
                &+O_\varepsilon\bigg(\frac{x}{T^{1-\varepsilon}\log x}\bigg)+O_\varepsilon(x^\varepsilon T^{\varepsilon}).
            \end{align*}

        Observe that the case when $k$ is odd differs very slightly from the above: there is an absence of the factor $P(ks)$ and, instead of $\zeta(ks)$, we have $L(ks,\chi)$. Recalling that \Cref{sequence-lemma} gives the same bound for both $1/\zeta(ks)$ and $1/L(ks,\chi)$, we conclude that the same argument applies and thus we have our lemma for $f=\mu^{(k)}\chi$.

        Again, noting that the factors $P(s)$ and $P(s)/P(s/k)$, that emerges in the Dirichlet series of $f=\mu^{(k)}g_\chi$, are negligible in the above estimates, the lemma for $f=\mu^{(k)}g_\chi$ follows readily.
    \end{proof}

For a general imaginary height $T$, we assume more than the simplicity of zeros and also must pay the price of an additional error term. We have
    \begin{lemma}\label{lemma-general-T}
        Assume the generalized Riemann hypothesis and conjectures from \Cref{first-hypothesis,second-hypothesis}. Let $\chi$ be a primitive real non-principal Dirichlet character modulo $q$ and $k\geq 2$ a fixed integer. Then, for any $T\geq 2$, $x\geq 2$,
            \begin{align*}
                \sum_{n\leq x}f(n)=\sum_{|\gamma|<T}\frac{L(\frac{\rho}{k},\chi)}{\rho}Z_f(\rho)x^{\rho/k}+\Tilde{E}(x,T),
            \end{align*}
        where $Z_f(s)$ is defined as in the previous lemma (\cref{def-Zf}), and
            \begin{align*}
                \Tilde{E}(x,T)\ll_\varepsilon 1+\frac{x\log x}{T}+\frac{x}{T^{1-\varepsilon}\log x}+x^{\varepsilon}T^{\varepsilon}+\frac{x^{\frac{1}{2k}}(\log T)^{\frac{1}{2}}}{T^{\varepsilon}},
            \end{align*}
        for any $\varepsilon>0$ small enough.
    \end{lemma}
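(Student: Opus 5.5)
The plan is to reduce \Cref{lemma-general-T} to \Cref{lemma-specific-T} by comparing an arbitrary height $T$ with a nearby height from the sequence $\mathcal{T}$ of \Cref{sequence-lemma}.

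First, the hypotheses \Cref{first-hypothesis,second-hypothesis} force all zeros of $\zeta(s)$ (resp.\ $L(s,\chi)$) to be simple, since otherwise the sum defining $J_{-1}$ (resp.\ $\Tilde{J}_{-1}$) would contain an infinite term. Hence \Cref{lemma-specific-T} applies. Given $T\geq 2$, choose $n=[T]$ and take $T_n\in\mathcal{T}$ with $n\leq T_n\leq n+1$, so that $|T-T_n|\leq 1$. Then
\begin{align*}
\sum_{n\leq x}f(n)=\sum_{|\gamma|<T}\frac{L(\frac{\rho}{k},\chi)}{\rho}Z_f(\rho)x^{\rho/k}\pm\sum_{\gamma\text{ between }T,T_n}\frac{L(\frac{\rho}{k},\chi)}{\rho}Z_f(\rho)x^{\rho/k}+E(x,T_n).
\end{align*}
The term $E(x,T_n)$ already has the form of the first four terms of $\Tilde{E}(x,T)$, because $T_n\asymp T$. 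It therefore remains to bound the sum $S$ over the zeros with $\gamma$ in a window of length at most $1$ near $\pm T$.

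To bound $S$, write $|x^{\rho/k}|=x^{1/(2k)}$. For these zeros, \eqref{gamma-factor-bound} together with the Lindelöf-type bound gives $|L(\rho/k,\chi)/\rho|\ll T^{-\frac12-\frac{1}{2k}+\varepsilon'}$, while $P(\rho/k)$ and $P(\rho)$ are $\ll1$ by \eqref{bound-for-P(s)}. Applying Cauchy--Schwarz to the window,
\begin{align*}
|S|\ll x^{\frac{1}{2k}}T^{-\frac12-\frac1{2k}+\varepsilon'}\Big(\sum_{T-1<\gamma\leq T+1}1\Big)^{1/2}\Big(\sum_{T-1<\gamma\leq T+1}|Z_f(\rho)|^2\Big)^{1/2}.
\end{align*}
The first inner sum is $\ll\log T$ by \Cref{classical-zeros-bound}. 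The second is at most $J_{-1}(T+1)$ (resp.\ $\Tilde{J}_{-1}(T+1)$), which by hypothesis is $\ll T^{1+\frac1k-\varepsilon}$. This gives
\begin{align*}
|S|\ll x^{\frac1{2k}}(\log T)^{1/2}\,T^{-\frac12-\frac1{2k}+\varepsilon'}\,T^{\frac12+\frac1{2k}-\frac{\varepsilon}{2}},
\end{align*}
which, after renaming $\varepsilon$ with $\varepsilon'<\varepsilon/2$, is the last error term $x^{\frac1{2k}}(\log T)^{1/2}T^{-\varepsilon}$.

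The main obstacle is that the moment hypothesis controls only the full sum up to height $T$, not a short window. Bounding the window by the whole moment is wasteful, but it should still just suffice because of the saving $T^{-\frac12-\frac1{2k}}$ from $L(\rho/k,\chi)/\rho$. The step needing the most care is therefore the exponent bookkeeping, in particular checking that the Lindelöf exponent $\varepsilon'$ can be absorbed into the saving from the moment hypothesis. The case distinctions for $f=\mu^{(k)}g_\chi$ and the parity of $k$ only change $Z_f$ by bounded factors of $P$, so they require no extra work.
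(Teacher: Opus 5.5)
Your proposal is correct and follows the paper's proof essentially step for step. You compare $T$ with a nearby $T_n\in\mathcal{T}$ and apply \Cref{lemma-specific-T} at height $T_n$. You then bound the leftover window of zeros by Cauchy--Schwarz, using the full second negative moment hypothesis, the pointwise bound $|L(\rho/k,\chi)/\rho|\ll T^{-\frac{1}{2}-\frac{1}{2k}+\varepsilon'}$, and the $\ll\log T$ count of zeros in a unit window, exactly as the paper does.

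Your version is also slightly more careful in two places. You allow $T_n>T$, whereas the paper tacitly takes $T_n\le T$, which \Cref{sequence-lemma} does not guarantee. You also state explicitly that the moment hypothesis forces the zeros to be simple, which is needed to invoke \Cref{lemma-specific-T}.
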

    \begin{proof}
        Let $n\leq T\leq n+1$ and take $T_n$ such that $n\leq T_n\leq T\leq n+1$. Again, suppose $k$ is even and $f=\mu^{(k)}\chi$. By the previous lemma,
            \begin{align*}
                \sum_{n\leq x}f(n)=\bigg(\sum_{|\gamma|<T}-\sum_{T_n\leq |\gamma|\leq T}\bigg)\frac{L(\frac{\rho}{k},\chi)P(\rho)x^{\rho/k}}{\rho\zeta'(\rho)}+E(x,T),
            \end{align*}
        where $E(x,T)\ll_\varepsilon 1+\frac{x\log x}{T}+\frac{x}{T^{1-\varepsilon}\log x}+x^{\varepsilon}T^{\varepsilon}$.

        Applying Cauchy-Schwarz inequality, the second sum is bounded as follows:
            \begin{align*}
                \bigg|\sum_{T_n\leq \gamma\leq T}\frac{L(\frac{\rho}{k},\chi)P(\rho)x^{\rho/k}}{\rho\zeta'(\rho)}\bigg|&\ll x^{\frac{1}{2k}}\bigg(\sum_{T_n\leq \gamma\leq T}\frac{1}{|\zeta'(\rho)|^2} \bigg)^{\frac{1}{2}}\bigg(\sum_{T_n\leq \gamma\leq T}\bigg|\frac{L(\frac{\rho}{k},\chi)}{\rho}\bigg|^2 \bigg)^{\frac{1}{2}}\\
                &\ll_\varepsilon x^{\frac{1}{2k}}[T^{1+\frac{1}{k}-4\varepsilon}]^{\frac{1}{2}}\bigg(\sum_{T_n\leq \gamma\leq T}\bigg|\frac{1}{\gamma^{\frac{1}{2}+\frac{1}{2k}-\varepsilon}}\bigg|^2 \bigg)^{\frac{1}{2}}\\
                &\ll_\varepsilon x^{\frac{1}{2k}}T^{\frac{1}{2}+\frac{1}{2k}-2\varepsilon}\frac{(\log T)^{\frac{1}{2}}}{T^{\frac{1}{2}+\frac{1}{2k}-\varepsilon}}\ll_\varepsilon \frac{x^{\frac{1}{2k}}(\log T)^{\frac{1}{2}}}{T^{\varepsilon}}.
            \end{align*}

        Thus,
            \begin{align*}
                \Tilde{E}(x,T)\ll_\varepsilon E(x,T)+\frac{x^{\frac{1}{2k}}(\log T)^{\frac{1}{2}}}{T^{\varepsilon}}.
            \end{align*}

        The case where $k$ is odd follows similarly, with $L'(\rho,\chi)$ instead of $\zeta'(\rho)$ and without $P(\rho)$.

        If $f=\mu^{(k)}g_\chi$, the same reasoning as in the proof of \Cref{lemma-specific-T} applies: for $\Re(s)>0$, $P(s)$ and $1/P(ks)$ are $\ll 1$. Therefore, when applying Cauchy-Schwarz, the contribution is an implicit constant.
    \end{proof}

    \section{Proof of \texorpdfstring{\Cref{log-meas-zero-thm,cor-distribution}}{}}\label{proofs-section}

    Now we use Ng's result (\Cref{distribution-result-Ng}) to prove our theorem on the logarithmic distribution of $x^{-\frac{1}{2k}}\sum_{n\leq x}f(n)$.

    \begin{proof}[Proof of \Cref{cor-distribution}]

        First we observe that, as in the proof of \Cref{main-lemma}, taking the sequences $\lambda_n=\frac{\gamma_n}{k}$ and $r_n=\frac{L(\rho_n/k,\chi)P(\rho_n)}{\rho_n\zeta'(\rho_n)}$ (for even $k$), it follows that they satisfy \Cref{assumption-1,assumption-3}. Note that \Cref{assumption-2} for the zeros of the Riemann zeta function is the known result $N(T)\ll T\log T$ presented in \Cref{section-preliminaries}.

        Now, by \Cref{lemma-general-T},
            \begin{align*}
                \sum_{n\leq x}f(n)=\sum_{|\gamma|<T}\frac{L(\frac{\rho}{k},\chi)P(\rho)x^{\rho/k}}{\rho\zeta'(\rho)}+\sum_{T\leq |\gamma|<X}\frac{L(\frac{\rho}{k},\chi)P(\rho)x^{\rho/k}}{\rho\zeta'(\rho)}+\Tilde{E}(x,X).
            \end{align*}
            
        Taking $x=e^y$, $Y=\log X$ and dividing both sides by $e^{\frac{y}{2k}}$,
            \begin{align*}
                \frac{\sum_{n\leq e^y}f(n)}{e^{\frac{y}{2k}}}&=\bigg(\sum_{|\gamma|<T}+\sum_{T\leq |\gamma|<e^{Y}}\bigg)\frac{L(\frac{\rho}{k},\chi)P(\rho)e^{iy\gamma/k}}{\rho\zeta'(\rho)}+\frac{\Tilde{E}(e^y,e^{Y})}{e^{\frac{y}{2k}}}.
            \end{align*}

        Next, by \Cref{main-lemma-corollary}, we have for the middle term:
            \begin{align*}
                \int_{y_0}^Y\bigg|\sum_{T\leq |\gamma|<e^{Y}}&\frac{L(\frac{\rho}{k},\chi)P(\rho)e^{iy\gamma/k}}{\rho\zeta'(\rho)}\bigg|^2\,dy\\
                &\ll\sum_{j=0}^{[Y]-y_0-1}\int_{y_0+j}^{y_0+j+1}\bigg|\sum_{T\leq |\gamma|<e^{Y}}\frac{L(\frac{\rho}{k},\chi)P(\rho)e^{iy\gamma/k}}{\rho\zeta'(\rho)}\bigg|^2\,dy\\
                &\ll_\varepsilon\sum_{j=0}^{[Y]}\frac{1}{T^{\varepsilon}}\ll_\varepsilon \frac{Y}{T^{\varepsilon}}.
            \end{align*}

        From the error term in \Cref{lemma-general-T}, choosing $\varepsilon<\frac{1}{4k}$, it follows that
            \begin{align*}
                \int_{y_0}^Y\bigg|&\frac{\Tilde{E}(e^y,e^{Y})}{e^{\frac{y}{2k}}}\bigg|^2\,dy\\
                &\ll_\varepsilon \int_{y_0}^Y \frac{1}{e^{\frac{y}{k}}}+\frac{e^{y(2-\frac{1}{k})}y^2}{e^{2Y}}+\frac{e^{y(2-\frac{1}{k})}}{e^{2Y(1-\varepsilon)}y^2}+e^{y(2\varepsilon-\frac{1}{k})}e^{2Y\varepsilon}+\frac{Y}{e^{2Y\varepsilon}}\,dy\\
                &\ll_\varepsilon 1.
            \end{align*}

        Putting these two results together,
            \begin{align*}
                \lim_{Y\to\infty}\frac{1}{Y}\int_{y_0}^Y\bigg|\sum_{T\leq |\gamma|<e^{Y}}\frac{L(\frac{\rho}{k},\chi)P(\rho)e^{iy\gamma/k}}{\rho\zeta'(\rho)}+\frac{\Tilde{E}(e^y,e^{Y})}{e^{\frac{y}{2k}}}\bigg|^2\,dy\ll_\varepsilon \frac{1}{T^{\varepsilon}}.
            \end{align*}

        Finally, since the zeros of the Riemann zeta function are reflected on the real axis and $z+\overline{z}=2\Re(z)$, if we label the ordinates of those zeros up to height $T$ as we did before, e.g., $\rho_n=\frac{1}{2}+i\gamma_n$, we have
            \begin{align*}
                \sum_{|\gamma|<T}\frac{L(\frac{\rho}{k},\chi)P(\rho)e^{iy\gamma/k}}{\rho\zeta'(\rho)}=2\Re\bigg(\sum_{n=1}^{N(T)} \frac{L(\frac{\rho_n}{k},\chi)P(\rho_n)e^{iy\gamma_n/k}}{\rho_n\zeta'(\rho_n)}\bigg).
            \end{align*}

        Therefore, choosing
            \begin{align*}
                \varphi(y)=\frac{\sum_{n\leq e^y}f(n)}{e^{\frac{y}{2k}}},
            \end{align*}
        the existence of the limiting distribution follows from \Cref{distribution-result-Ng}.

        The same proof goes for odd $k$. \Cref{assumption-1,assumption-3} are satisfied (also as in the proof of \Cref{main-lemma}, since it treats the more general case where the character is not necessarily real) and \Cref{assumption-2} is \Cref{classical-zeros-bound}.

        For $f=\mu^{(k)}g_\chi$, by the same arguments we made in the preceding lemmas regarding the contribution of $P(s)$ and $1/P(ks)$, we obtain our result.
    \end{proof}

Next we prove \Cref{log-meas-zero-thm}. As we remarked earlier, this result essentially shows, under stronger conditions, that the bound $\ll x^{\frac{1}{2k}+\varepsilon}$ for the partial sums of $f=\mu^{(k)}\chi$ (resp. $\mu^{(k)}g_\chi$) is attainable.
    \begin{proof}[Proof of \Cref{log-meas-zero-thm}]
        Assume that $k$ is even (the odd case follows from the exact same proof, so we omit the details). Starting with \Cref{lemma-general-T}, we have
            \begin{align}\label{split-sum-thm2}
                \sum_{n\leq x}f(n)&=\bigg(\sum_{|\gamma|<\log T}+\sum_{\log T\leq |\gamma| <T}\bigg)\frac{L(\frac{\rho}{k},\chi)P(\rho)x^{\rho/k}}{\rho\zeta'(\rho)}\\
                &+O_\varepsilon\bigg(1+\frac{x\log x}{T}+\frac{x}{T^{1-\varepsilon}\log x}+x^{\varepsilon}T^{\varepsilon}+\frac{x^{\frac{1}{2k}}(\log T)^{\frac{1}{2}}}{T^{\varepsilon}}\bigg)\nonumber,
            \end{align}
        for any small enough $\varepsilon>0$.

        First note that, if $T\leq x\ll T$, the error term is $O_\varepsilon(x^{\frac{1}{2k}-\varepsilon}(\log x)^{\frac{1}{2}})$.

        By Cauchy-Schwarz inequality, the bound $\sum_{0<\gamma\leq T}|\zeta'(\rho)|^{-2}\ll T^{1+\frac{1}{k}-\varepsilon}$ implies
            \begin{align*}
                \sum_{0<\gamma\leq T}\frac{1}{|\zeta'(\rho)|}&\leq \bigg(\sum_{0<\gamma\leq T}\frac{1}{|\zeta'(\rho)|^2} \bigg)^{\frac{1}{2}}\bigg(\sum_{0<\gamma\leq T}1\bigg)^{\frac{1}{2}}\\
                &\ll T^{\frac{1}{2}+\frac{1}{2k}-\frac{\varepsilon}{2}}T^{\frac{1}{2}}(\log T)^{\frac{1}{2}}\\
                &\ll T^{1+\frac{1}{2k}-\frac{\varepsilon}{2}}(\log T)^{\frac{1}{2}}.
            \end{align*}

        Now, for the smaller sum on the right hand side of \eqref{split-sum-thm2}, we use the above estimate and \cref{gamma-factor-bound}. We have,
            \begin{align*}
                \bigg|\sum_{|\gamma|<\log T}\frac{L(\frac{\rho}{k},\chi)P(\rho)x^{\rho/k}}{\rho\zeta'(\rho)}\bigg|&\ll x^{\frac{1}{2k}}\sum_{0<\gamma <\log T}\bigg|\frac{L(\frac{\rho}{k},\chi)P(\rho)}{\rho\zeta'(\rho)}\bigg|\\
                & \ll x^{\frac{1}{2k}}\frac{(\log T)^{\frac{1}{2}-\frac{1}{2k}}}{\log T}\sum_{0<\gamma<\log T}\frac{1}{|\zeta'(\rho)|}\\
                &\ll x^{\frac{1}{2k}}\frac{(\log T)^{\frac{1}{2}-\frac{1}{2k}}}{\log T}(\log T)^{1+\frac{1}{2k}-\frac{\varepsilon}{2}}(\log\log T)^{\frac{1}{2}}\\
                &\ll_\varepsilon x^{\frac{1}{2k}}(\log T)^{\frac{1}{2}+\varepsilon}.
            \end{align*}
        
        Restricting ourselves to $x\in [T,eT]$, let $C_1$ be the constant that is implicit in the above estimate and $C_2$ the one coming from the error term of \eqref{split-sum-thm2}. We thus have
            \begin{align}\label{implicit-cte-eq}
                \bigg|\sum_{\log T\leq |\gamma|<T}\frac{L(\frac{\rho}{k},\chi)P(\rho)x^{\rho/k}}{\rho\zeta'(\rho)}\bigg|\nonumber&\geq \bigg|\sum_{n\leq x}f(n)\bigg|-C_1x^{\frac{1}{2k}}(\log x)^{\frac{1}{2}+\varepsilon}-C_2x^{\frac{1}{2k}-\varepsilon}(\log x)^{\frac{1}{2}}\\
                &=\bigg|\sum_{n\leq x}f(n)\bigg|-\bigg(C_1+\frac{C_2}{x^\varepsilon(\log x)^\varepsilon}\bigg)x^{\frac{1}{2k}}(\log x)^{\frac{1}{2}+\varepsilon}\nonumber\\
                &\geq\bigg|\sum_{n\leq x}f(n)\bigg|-(C_1+C_2)x^{\frac{1}{2k}}(\log x)^{\frac{1}{2}+\varepsilon}.
            \end{align}

        Define the set $S=\{x\geq e^2 : |\sum_{n\leq x}f(n)| \geq \Tilde{C} x^{\frac{1}{2k}}(\log x)^{\frac{1}{2}+\varepsilon} \}$, where $\Tilde{C}$ is a sufficiently large constant to be chosen later. If $x\in S\cap[T,eT]$, it follows from \cref{implicit-cte-eq} that
            \begin{align*}
                \bigg|\sum_{\log T\leq |\gamma|<T}\frac{L(\frac{\rho}{k},\chi)P(\rho)x^{\rho/k}}{\rho\zeta'(\rho)}\bigg|\geq (\Tilde{C}-(C_1+C_2))x^{\frac{1}{2k}}(\log x)^{\frac{1}{2}+\varepsilon}.
            \end{align*}
        If $\Tilde{C}>C_1+C_2+1$, the inequality above is valid even if we replace the constant by $1$.

        Taking $T=e^M$, then $x\in S\cap[e^M,e^{M+1}]$ and, by squaring both sides, dividing through by $x^{1+\frac{1}{k}}$ and integrating, we obtain
            \begin{align*}
                M^{1+2\varepsilon}\int_{S\cap[e^M,e^{M+1}]}\,\frac{dx}{x}&\leq \int_{e^M}^{e^{M+1}}\bigg|\sum_{M\leq |\gamma|<e^M}\frac{L(\frac{\rho}{k},\chi)P(\rho)x^{\rho/k}}{\rho\zeta'(\rho)}\bigg|^2\,\frac{dx}{x^{1+\frac{1}{k}}}\ll_{k,\varepsilon} \frac{1}{M^{\varepsilon}},
            \end{align*}
        where the last inequality follows from \Cref{main-lemma-corollary}. Therefore,
            \begin{align*}
                \int_{S\cap [1,X]}\,\frac{dx}{x}&\leq\sum_{M=2}^{[\log X]}\int_{S\cap [e^M,e^{M+1}]}\,\frac{dx}{x}\ll_{k,\varepsilon} \sum_{M=2}^{[\log X]}\frac{1}{M^{1+3\varepsilon}}\ll_{k,\varepsilon} 1.
            \end{align*}

        The above shows that the set $S$ has finite logarithmic measure, so we obtain our theorem.
    \end{proof}

\section{Large deviations}\label{large-deviation-section}

In this section we assume\footnote{The same justifications from previous sections can be made to convince the reader that the results in this section also holds for $f=\mu^{(k)}g_\chi$. To avoid excessive repetition, here we won't expand for the modified character.} $f=\mu^{(k)}\chi$ and the following conjectures:
    \begin{conjecture}[Linear Independence Conjecture - LI]
        The set $\{\gamma \in\mathbb{R}_{>0} : \zeta(\beta+i\gamma)=0, \beta\in(0,1)\}$ is linearly independent over $\mathbb{Q}$.
    \end{conjecture}
    \begin{conjecture}[Generalized Linear Independence Conjecture - GLI]
        For any fixed $q\geq 3$ and $\kappa\geq 1$ integer, the set $\cup_{\chi^\kappa (q)}\{\gamma\in\mathbb{R}_{>0} : L(\beta+i\gamma,\chi^\kappa)=0, \beta\in(0,1)\}$ is linearly independent over $\mathbb{Q}$. 
    \end{conjecture}
That is to say, if we take $\gamma_1,\dots,\gamma_n$ positive imaginary parts of non-trivial zeros of the Riemann zeta function (resp. Dirichlet $L$-function), we have that
    \begin{align*}
        a_1\gamma_1+\cdots+a_n\gamma_n =0
    \end{align*}
only if $a_i=0,\forall i$.

Let $V\in\mathbb{R}$ be fixed and define
    \begin{align*}
        g(x)=   \begin{cases}
                    1   &\text{ if $x\geq V$,}\\
                    0   &\text{ otherwise.}
                \end{cases}
    \end{align*}

From \Cref{def-limiting-distribution}, in the case where $\nu_k$ is absolutely continuous, our \Cref{cor-distribution} implies
    \begin{align*}
        \lim_{Y\to \infty}\frac{1}{Y}\meas\bigg\{y\in[0,Y] : \frac{\sum_{n\leq e^y}f(n)}{e^{\frac{y}{2k}}}\geq V\bigg\}=\int_{V}^\infty d\nu_k(x)=\nu_k([V,\infty)),
    \end{align*}
where $\meas\{\cdot\}$ is the Lebesgue measure on $\mathbb{R}$. Roughly, we would like to study how often $\sum_{n\leq e^{y}}f(n)$ takes values greater than $e^{\frac{y}{2k}}V$.

Therefore, defining a proper random variable $X(\boldsymbol{\theta})$ (that comes from the construction of the limiting distribution $\nu_k$ above), we would like to estimate bounds for
    \begin{align*}
        \nu_k([V,\infty))=\mathbb{P}(X(\boldsymbol{\theta})\geq V).
    \end{align*}

Turns out this is identical to the upper and lower bounds obtained by Meng for the distribution of $k$-free numbers, so we just highlight here the results and main concepts of the proof.

The following proposition shows that the Fourier transform of $\nu_k$ can be explicitly represented, as long as we assume the LI/GLI conjecture.
    \begin{proposition}[\citeauthor{Ng2004,Meng2017}]
        Assume the generalized Riemann hypothesis. Additionally, assume \Cref{first-hypothesis} and LI conjecture or \Cref{second-hypothesis} and GLI conjecture. Then, the Fourier transform $\hat{\nu}_k(\xi)=\int_{\mathbb{R}} e^{-i\xi t}\,d\nu_k(t)$ is equal to
            \begin{align*}
                \hat{\nu}_k(\xi)=\prod_{\gamma>0} \mathcal{J}_0\bigg(2\xi\bigg|\frac{L(\frac{\rho}{k},\chi)P(\rho)}{\rho \zeta'(\rho)}\bigg|\bigg) \quad\text{ or }\quad \hat{\nu}_k(\xi)=\prod_{\gamma>0} \mathcal{J}_0\bigg(2\xi\bigg|\frac{L(\frac{\rho}{k},\chi)}{\rho L'(\rho,\chi)}\bigg|\bigg),
            \end{align*}
        if $k$ is even or odd, respectively. Here $\mathcal{J}_0(z)=\sum_{m=0}^\infty\frac{(-1)^m(z/2)^{2m}}{(m!)^2}$, is the Bessel function of order $0$.
    \end{proposition}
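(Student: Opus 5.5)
The plan is to follow Ng's computation for the Möbius function: identify $\nu_k$ with the law of an explicit random trigonometric series, then compute its characteristic function using LI/GLI. Take $k$ even (the odd case is identical with $L'(\rho,\chi)$ in place of $\zeta'(\rho)$ and no $P(\rho)$). Write $r_\gamma=\frac{L(\rho/k,\chi)P(\rho)}{\rho\zeta'(\rho)}$. The proof of \Cref{cor-distribution} gives $\varphi(y)=2\Re\sum_{0<\gamma<T}r_\gamma e^{iy\gamma/k}+\mathcal{R}_T(y)$ with
\[
\limsup_{Y\to\infty}\frac1Y\int_{y_0}^Y|\mathcal{R}_T(y)|^2\,dy\ll T^{-\varepsilon}.
\]
Denote the trigonometric polynomial by $\varphi_T(y)$.

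\textbf{Step 1 (finite truncations).} By LI (resp. GLI), the frequencies $\gamma/k$ with $0<\gamma<T$ are $\mathbb{Q}$-linearly independent. Kronecker--Weyl equidistribution then shows that $y\mapsto(y\gamma/k \bmod 2\pi)_{0<\gamma<T}$ is equidistributed on the torus $\mathbb{T}^{N(T)}$. Hence $\varphi_T$ has limiting distribution $\nu_{k,T}$, the law of
\[
X_T(\boldsymbol\theta)=\sum_{0<\gamma<T}2|r_\gamma|\cos(\theta_\gamma+\arg r_\gamma),
\]
where the $\theta_\gamma$ are i.i.d. uniform on $[0,2\pi)$. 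Since $E[e^{-i\xi 2|r|\cos\theta}]=\mathcal{J}_0(2\xi|r|)$, independence gives
\[
\hat\nu_{k,T}(\xi)=\prod_{0<\gamma<T}\mathcal{J}_0(2\xi|r_\gamma|).
\]

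\textbf{Step 2 (passing $T\to\infty$).} Take $g$ bounded and Lipschitz. The Lipschitz bound and Cauchy--Schwarz give
\[
\frac1Y\int_0^Y |g(\varphi)-g(\varphi_T)|\,dy\ll\Big(\frac1Y\int|\mathcal{R}_T|^2\Big)^{1/2}.
\]
Hence $|\int g\,d\nu_k-\int g\,d\nu_{k,T}|\ll T^{-\varepsilon/2}$, so $\nu_{k,T}\to\nu_k$ weakly, and by Lévy's theorem $\hat\nu_{k,T}\to\hat\nu_k$ pointwise. It remains to see that the infinite product converges to a finite limit. The relevant quantity is $\sum_\gamma|r_\gamma|^2$, which is finite: Assumption~\ref{assumption-1} gives $\sum_{\lambda_n\le T}\lambda_n^2|r_n|^2\ll T^\theta$ with $\theta<2$, and partial summation over dyadic blocks yields $\sum|r_\gamma|^2<\infty$. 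Since $\mathcal{J}_0(z)=1-z^2/4+O(z^4)$, the product $\prod\mathcal{J}_0(2\xi|r_\gamma|)$ converges, locally uniformly in $\xi$. It equals the limit of the partial products, which is $\hat\nu_k(\xi)$.

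\textbf{Main obstacle.} The delicate point is Step 1's use of LI/GLI: one must check that the relevant frequencies are genuinely independent. For odd $k$ the zeros are those of $L(s,\chi)$ for the single quadratic character $\chi$, and this set is covered by GLI with $\kappa=1$. One must also ensure that the conjugate pairs $\pm\gamma$ are handled by writing the sum as $2\Re$ over $\gamma>0$, which the proof of \Cref{cor-distribution} already arranges. A further point is that the $P(\rho)$ factor must not destroy square-summability; this is harmless by \eqref{bound-for-P(s)}, since $\Re\rho>0$.
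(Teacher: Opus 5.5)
Your proposal is correct and is essentially the argument the paper points to (Ng's Corollary~1 and Humphries' Theorem~6.1). You apply Kronecker--Weyl under LI/GLI to the truncations $\varphi_T$, use independence to get one factor $\mathcal{J}_0(2\xi|r_\gamma|)$ per zero, and pass to $T\to\infty$ via the mean-square tail bound from the proof of \Cref{cor-distribution}. Your separate check that the infinite product converges is in fact redundant: the partial products are exactly $\hat\nu_{k,T}(\xi)$, which you have already shown converge to $\hat\nu_k(\xi)$.
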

The proof follows similarly to Corollary 1 of \cite{Ng2004} or Theorem 6.1 of \cite{Humphries2013} and the tool that allows us to compute this Fourier transform is the construction of $\nu_k$ by means of the Kronecker-Weyl Theorem \cite[See][Lemma~5.3]{Humphries2013}.

Now, as in the work of \citeauthor{Ng2004} and \citeauthor{Meng2017}, we let $\boldsymbol{\theta}=(\theta_1,\theta_2,\dots)\in\mathbb{T}^\infty$. Assuming the LI/GLI conjectures, the limiting distribution $\nu_k$ is equal to $\nu_{k,X}(x)=\mathbb{P}(X^{-1}(-\infty,x))$, where $X$ is the random variable on the infinite torus defined by
    \begin{align}\label{rv-even}
        X(\boldsymbol{\theta})=
                \begin{dcases}
                    2\sum_{\gamma>0}\bigg|\frac{L(\frac{\rho}{k},\chi)P(\rho)}{\rho\zeta'(\rho)}\bigg|\sin(2\pi \theta_\gamma) &\text{ if $k$ is even,}\\
                    2\sum_{\gamma>0}\bigg|\frac{L(\frac{\rho}{k},\chi)}{\rho L'(\rho,\chi)}\bigg|\sin(2\pi\theta_\gamma)   &\text{ if $k$ is odd.}
                \end{dcases}
    \end{align}

Thus, under these assumptions, one can study $\nu_k$ by means of the above random variable $X(\boldsymbol{\theta})$ defined on the infinite torus $\mathbb{T}^\infty$.

The main tool employed by previous authors to work with sine random variables as in \eqref{rv-even} is the following result of \citeauthor{Montgomery1980}:
    \begin{lemma}\label{Montgomery-prob-lemma}
        Let $X(\boldsymbol{\theta})=\sum_{l=1}^\infty r_l\sin(2\pi \theta_l)$, where $\sum_{l=1}^\infty r_l^2<\infty$. For any $K\geq 1$ integer,
            \begin{align*}
                \mathbb{P}\bigg(X(\boldsymbol{\theta})\geq 2\sum_{l=1}^K r_l\bigg)\leq \exp\bigg(-\frac{3}{4}\bigg(\sum_{l=1}^K r_l\bigg)^2\bigg(\sum_{l=K+1}^\infty r_l^2\bigg)^{-1}\bigg),
            \end{align*}
        and
            \begin{align*}
                \mathbb{P}\bigg(X(\boldsymbol{\theta})\geq \frac{1}{2}\sum_{l=1}^K r_l\bigg)\geq \frac{1}{2^{40}}\exp\bigg(-100\bigg(\sum_{l=1}^K r_l\bigg)^2\bigg(\sum_{l=K+1}^\infty r_l^2\bigg)^{-1}\bigg).
            \end{align*}
    \end{lemma}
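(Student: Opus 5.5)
Since $\sin(2\pi\theta)$ and $-\sin(2\pi\theta)$ have the same law under the uniform measure, I first reduce to $r_l\geq 0$; this is the situation of \eqref{rv-even}. I write $A=\sum_{l\le K}r_l$ and $B=\sum_{l>K}r_l^2$. The coordinates $\theta_l$ are independent and uniform on $[0,1)$. For a single term one has
\begin{align*}
\mathbb{E}\,e^{\lambda r\sin(2\pi\theta)}=I_0(\lambda r),
\end{align*}
the modified Bessel function. The whole proof will rest on two elementary bounds for it:
\begin{align*}
I_0(u)\le e^{|u|}\quad\text{and}\quad I_0(u)\le e^{u^2/4}.
\end{align*}
The second follows by comparing the power series $\sum_m (u/2)^{2m}/(m!)^2$ and $\sum_m (u/2)^{2m}/m!$ term by term.

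\textbf{Upper bound.} I will use an exponential Chebyshev (Chernoff) argument. By independence, for any $\lambda>0$,
\begin{align*}
\mathbb{P}(X\ge 2A)\le e^{-2\lambda A}\prod_{l\le K}I_0(\lambda r_l)\prod_{l>K}I_0(\lambda r_l)\le \exp\Big(-2\lambda A+\lambda A+\tfrac{\lambda^2}{4}B\Big).
\end{align*}
Here the first bound on $I_0$ is used for the head and the second for the tail. Choosing $\lambda=2A/B$ gives $\exp(-A^2/B)$, which is stronger than the claimed $\exp(-\tfrac34 A^2/B)$. The infinite product is justified by $\sum r_l^2<\infty$ and monotone convergence of partial sums. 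This step is routine.

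\textbf{Lower bound.} I will split $X=X_{\mathrm{head}}+X_{\mathrm{tail}}$ and use two devices.

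First, $X_{\mathrm{tail}}$ is symmetric and independent of $X_{\mathrm{head}}$. Hence $\mathbb{P}(X\ge A/2)\ge \tfrac12\,\mathbb{P}(X'\ge A/2)$ for any sub-sum $X'$ obtained by discarding a symmetric independent piece. So I am free to keep only part of the tail.

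Second, for the part I keep, I will do an exponential tilt. Under the measure $d\mathbb{Q}=e^{\lambda X'}d\mathbb{P}/\mathbb{E}e^{\lambda X'}$ the variables stay independent. The mean of each term becomes $r_l I_1(\lambda r_l)/I_0(\lambda r_l)$. For the head terms this is $\ge r_l\cdot c$ once $\lambda r_l\gtrsim 1$; for the tail terms it is about $\lambda r_l^2/2$ when $\lambda r_l$ is small. I will take $\lambda\asymp A/B$ so that the $\mathbb{Q}$-mean of $X'$ exceeds $A$. The $\mathbb{Q}$-variance is at most $\sum r_l^2$ over the kept terms. Then Chebyshev gives $\mathbb{Q}(A/2\le X'\le 2A')\ge 1/2$ for a suitable upper window $A'$. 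Undoing the tilt yields
\begin{align*}
\mathbb{P}(X'\ge A/2)\ge \tfrac12\, e^{-2\lambda A'}\,\mathbb{E}e^{\lambda X'}\ge \tfrac12\, e^{-C\lambda A'},
\end{align*}
using $\mathbb{E}e^{\lambda X'}\ge1$, which holds by Jensen since $\mathbb{E}X'=0$. With $\lambda\asymp A/B$ and $A'\asymp A$ this is $\exp(-C A^2/B)$, which matches the required form.

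\textbf{Main obstacle.} The delicate part is the regimes where $\lambda r_l$ is not uniformly small on the tail, i.e.\ where single large tail coefficients exist, or where $A^2/B$ is tiny. In the tiny case I would instead apply Paley--Zygmund, or the central-limit-free bound $\mathbb{P}(X\ge0)\ge$ an absolute constant, directly to $X$. For large tail coefficients, the first device lets me drop them symmetrically, or move them into the head, with the linear bound $I_1/I_0\ge c$ for $\lambda r\ge1$. Keeping every constant explicit through this case split is what produces the wasteful numerical constants $2^{-40}$ and $100$. I expect that bookkeeping, rather than any conceptual step, to be the hardest part. This follows Montgomery's original argument in \cite{Montgomery1980}.
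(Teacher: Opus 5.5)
The paper gives no proof of this lemma; it quotes it from Montgomery. So I judge your argument on its own.

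Your upper bound is correct, and it actually yields $\mathbb{P}(X\ge 2A)\le e^{-A^2/B}$. Justify the infinite product by Fatou along the a.s.\ convergent partial sums rather than by monotone convergence. Your symmetrization device and the Paley--Zygmund treatment of $A^2/B\le 1$ are also fine.

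The gap is in the lower bound, exactly in the regime you set aside as bookkeeping. For arbitrary $r_l\ge 0$ the lower bound is false, so no amount of constant-tracking can close it. Take $r_1=\dots=r_K=1$, $r_{K+1}=K/4$ and $r_l=0$ for $l>K+1$. Then $A=K$ and $B=K^2/16$, so the right-hand side equals $2^{-40}e^{-1600}$ for every $K$. But $X\le\sum_{l\le K}\sin(2\pi\theta_l)+K/4$, so $X\ge K/2$ forces $\sum_{l\le K}\sin(2\pi\theta_l)\ge K/4$. Your own Chernoff bound gives this event probability at most $e^{-K/16}$, which is smaller than $2^{-40}e^{-1600}$ once $K>3\cdot 10^4$.

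The example also shows why both of your remedies fail. Discarding the large tail coefficient by symmetry throws away all of $B$, so the tilt can only certify a bound in terms of what remains (here nothing). Moving it into the head changes $K$, $A$ and $B$, and hence proves a different inequality.

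What is missing is an ordering hypothesis such as $r_1\ge r_2\ge\cdots\ge 0$. Nonnegativity alone does not suffice, and it is all that \eqref{rv-even} provides when the terms are indexed by $\gamma$. With monotonicity your obstacle disappears: $r_l\le r_K\le A/K$ for every $l>K$, so $\lambda=cA/B$ gives $\lambda r_l\le cA^2/(BK)$ on the whole tail. You then split according to $A^2/B$ versus $K$.

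\textbf{Case $1\le A^2/B\le K$.} Here $\lambda r_l\le c$ for all $l>K$. Tilt the tail alone, discarding the head by symmetry; keeping the head can put up to $A^2$ into your crude variance bound. Since $u\mapsto I_1(u)/(uI_0(u))$ is non-increasing, the $\mathbb{Q}$-mean lies between $\frac{I_1(c)}{I_0(c)}A$ and $\frac{c}{2}A$, and the $\mathbb{Q}$-variance is at most $B\le A^2$. Cantelli's inequality on both sides, followed by your untilting step, gives $\mathbb{P}(X\ge A/2)\ge c_1e^{-c_2A^2/B}$. With $c=6$ one gets at least $\frac{1}{60}e^{-36A^2/B}$, which is within the stated constants.

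\textbf{Case $A^2/B>K$.} Require $\sin(2\pi\theta_l)\ge\frac12$ for all $l\le K$. This event has probability $3^{-K}\ge e^{-1.1A^2/B}$. Then keep the tail nonnegative, which independently has probability at least $\frac12$.

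This case split, made possible by the ordering, is the idea your sketch is missing.
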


By letting $r_\gamma=2|\frac{L(\rho/k,\chi)P(\rho)}{\rho\zeta'(\rho)}|$ (resp. $r_\gamma=2|\frac{L(\rho/k,\chi)}{\rho L'(\rho,\chi)}|$), similar to Sections 4 and 5 of \cite{Meng2017}, we obtain
    \begin{theorem}\label{large-dev-theorem}
        Let $k\geq 2$ be a fixed integer, assume the generalized Riemann hypothesis and also one of the following:
            \begin{enumerate}[(i)]
                \item $k$ even, LI conjecture and $\sum_{0<\gamma\leq T}|\zeta'(\rho)|^{-2}\ll T^{1+\varepsilon},\forall\varepsilon>0$;
                \item $k$ odd, GLI conjecture and $\sum_{0<\gamma\leq T}|L'(\rho,\chi)|^{-2}\ll T^{1+\varepsilon},\forall\varepsilon>0$.
            \end{enumerate}
        Then, there exists constants $c_1,c_2>0$ such that, for any $\varepsilon>0$,
            \begin{align}\label{large-dev-part1}
                \exp(-c_1V^{\frac{2k}{k-1}+\varepsilon})\leq \nu_k([V,\infty))\leq \exp(-c_2 V^{\frac{2k}{k-1}-\varepsilon}).
            \end{align}

        Moreover, if we respectively assume
            \[
                \sum_{0<\gamma\leq T}\frac{1}{|\zeta'(\rho)|^{2l}}\asymp T(\log T)^{(l-1)^2}\quad\text{ or }\quad \sum_{0<\gamma\leq T}\frac{1}{|L'(\rho,\chi)|^{2l}}\asymp T(\log T)^{(l-1)^2},
            \]
        for $l<\frac{3}{2}$ (see \Cref{Gonek-Hejhal-conjecture} and \cref{conjecture-L}), then there exists a constant $c_3>0$ such that
            \begin{align}\label{large-dev-part2}
                \nu_k([V,\infty))\ll \exp\bigg(-c_3\frac{V^{\frac{2k}{k-1}}}{(\log V)^{\frac{1}{2(k-1)}+o(1)}}\bigg).
            \end{align}
    \end{theorem}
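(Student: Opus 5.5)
The plan is to follow Montgomery's method as adapted by Ng and Meng and apply \Cref{Montgomery-prob-lemma} to the random variable \eqref{rv-even}. The argument rests on two-sided estimates for the two sums appearing there, $\sum_{l\le K} r_l$ and $\sum_{l>K} r_l^2$, with the $r_l$ ordered by the height $\gamma$ of the zero. Under the LI conjecture (resp. GLI) the limiting measure $\nu_k$ coincides with the law of $X(\boldsymbol{\theta})$. It is therefore enough to bound $\mathbb{P}(X(\boldsymbol{\theta})\ge V)$, which is legitimate because the proposition before the theorem identifies $\hat\nu_k$ as the Bessel product. I treat $k$ even; the odd case only replaces $\zeta'$ by $L'(\cdot,\chi)$ and drops $P$, and $P(\rho)\asymp 1$ on $\Re s=\frac12$ by \eqref{bound-for-P(s)}.

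\textbf{Step 1: tail sum.} On GRH, \eqref{gamma-factor-bound} gives $|L(\rho/k,\chi)|\ll \gamma^{\frac12-\frac1{2k}+\varepsilon}$. Hence
\[
r_\gamma\ll \gamma^{-\frac12-\frac1{2k}+\varepsilon}|\zeta'(\rho)|^{-1}.
\]
Using the hypothesis $J_{-1}(T)\ll T^{1+\varepsilon}$ and partial summation, I expect
\[
\sum_{\gamma>T} r_\gamma^2\ll T^{-1/k+\varepsilon}.
\]

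\textbf{Step 2: head sum.} By Cauchy--Schwarz against $N(T)$, as in the proof of \Cref{log-meas-zero-thm},
\[
\sum_{\gamma\le T} r_\gamma\ll T^{\frac12-\frac1{2k}+\varepsilon}.
\]
For the lower bound I would use a lower bound for $|L(\rho/k,\chi)|$ on average, together with H\"older applied to $\sum|\zeta'(\rho)|^{-1}$ against $\sum|\zeta'(\rho)|^{-2}$ and $N(T)$. Alternatively, $|\zeta'(\rho)|\ll\gamma^{\varepsilon}$ on RH bounds the $1/|\zeta'|$ factor from below. The aim is
\[
\sum_{\gamma\le T} r_\gamma\gg T^{\frac12-\frac1{2k}-\varepsilon}.
\]
I expect this lower bound on average of $|L(\rho/k,\chi)|$ over zeros of $\zeta$ to be the main obstacle. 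I would first try a mean-value computation of $\sum_{\gamma\le T}|L(\rho/k,\chi)|^2$ via the functional equation, $\Delta$ of size $\gamma^{\frac12-\frac1{2k}}$, combined with the fourth-moment upper bound and Hölder. Failing that, I would simply cite Meng's Section 4, where the identical computation appears.

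\textbf{Step 3: choice of $T$ and conclusion of \eqref{large-dev-part1}.} For the upper bound, choose $T$ with $2\sum_{\gamma\le T}r_\gamma=V$, i.e.\ $T\approx V^{\frac{2k}{k-1}\pm\varepsilon}$. The first inequality of \Cref{Montgomery-prob-lemma} then gives
\[
\exp\bigl(-\tfrac34 V^2 T^{1/k-\varepsilon}\bigr)=\exp\bigl(-c_2V^{2+\frac{2}{k-1}-\varepsilon}\bigr),
\]
and $2+\frac2{k-1}=\frac{2k}{k-1}$. For the lower bound, pick $T$ with $\frac12\sum_{\gamma\le T} r_\gamma=V$ and use the second inequality; this needs the matching lower bound $\sum_{\gamma>T} r_\gamma^2\gg T^{-1/k-\varepsilon}$, obtained by Step 2's lower bounds restricted to dyadic ranges.

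\textbf{Step 4: \eqref{large-dev-part2}.} Under the Gonek--Hejhal-type asymptotics, repeat Steps 1--3 with logarithmic precision. The case $l=1$ gives $\sum_{\gamma\le T}|\zeta'(\rho)|^{-2}\asymp T$, and $l=\frac12$ gives $\sum|\zeta'(\rho)|^{-1}\asymp T(\log T)^{1/4}$. These yield $\sum_{\gamma\le T} r_\gamma\ll T^{\frac12-\frac1{2k}}(\log T)^{\frac14+o(1)}$ and $\sum_{\gamma>T}r_\gamma^2\gg T^{-1/k}$, up to the log factors from the $L$-value moments. Solving for $T$ in terms of $V$, the logarithm contributes $(\log V)^{-\frac1{2(k-1)}+o(1)}$ in the exponent, exactly as in Meng's Section 5.
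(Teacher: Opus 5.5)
Your plan follows the paper's proof. Both apply \Cref{Montgomery-prob-lemma} to \eqref{rv-even} with two-sided bounds of order $T^{\frac12-\frac1{2k}\pm\varepsilon}$ for $\sum_{0<\gamma\le T}r_\gamma$ and $T^{-\frac1k\pm\varepsilon}$ for $\sum_{\gamma>T}r_\gamma^2$, and then take $T$ to be a suitable power of $V$. Your normalization $2\sum_{\gamma\le T}r_\gamma\approx V$ is slightly cleaner than the paper's: the upper bound for $\nu_k$ then uses only the two upper bounds, and the lower bound only the two lower bounds. Two points need fixing.

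First, the step you call the main obstacle is a one-line pointwise estimate, and this is what the paper uses.
By \eqref{functional-eq-L}, $|L(\rho/k,\chi)|=|\Delta(\rho/k)|\,|L(1-\rho/k,\chi)|$ with $|\Delta(\rho/k)|\asymp|\gamma|^{\frac12-\frac1{2k}}$.
The point $1-\rho/k$ lies on the line $\Re s=1-\frac1{2k}\ge\frac34$. There GRH gives $1/L(s,\chi)\ll_\varepsilon|t|^{\varepsilon}$, the $L$-analogue of \Cref{Lemma-lowerupper-bound-zeta}.
Hence $|L(\rho/k,\chi)|\gg|\gamma|^{\frac12-\frac1{2k}-\varepsilon}$ for every zero. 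Combined with $|\zeta'(\rho)|\ll|\gamma|^{\varepsilon}$, this gives $r_\gamma\gg|\gamma|^{-\frac12-\frac1{2k}-2\varepsilon}$.
Both lower bounds, for $\sum_{\gamma\le T}r_\gamma$ and for $\sum_{\gamma>T}r_\gamma^2$, then follow directly by summing over zeros. No mean values, H\"older or dyadic pieces are needed. Note also that Meng's computation is for $\zeta(\rho/k)$, so citing him would still require exactly this transfer to $L(\cdot,\chi)$.

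Second, Step 4 as written does not give \eqref{large-dev-part2}.

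\begin{itemize}
\item \textbf{Wrong direction for the tail.} \eqref{large-dev-part2} is an upper bound for $\nu_k$, so you need $\sum_{\gamma>T}r_\gamma^2\ll T^{-1/k}(\log T)^{o(1)}$, not a lower bound.
\item \textbf{The pointwise bound is too lossy here.} Neither log-precise upper bound can be obtained from $|L(1-\rho/k,\chi)|\ll|\gamma|^{\varepsilon}$, which loses a power of $T$.
\item \textbf{What is needed instead.} Separate $|L(1-\rho/k,\chi)|$ from $|\zeta'(\rho)|^{-1}$ (resp. $|\zeta'(\rho)|^{-2}$) by H\"older. This uses the moment hypothesis with $l$ slightly above $\frac12$ (resp. slightly above $1$), which is where the range $l<\frac32$ and the $o(1)$ come in. Combine it with Meng's bound $\sum_{0<\gamma\le T}|\zeta(1-w\rho)|^{2r}\ll T\log T$, transferred to $L(\cdot,\chi)$. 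This is the route the paper indicates.
\end{itemize}

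With head $\ll T^{\frac{k-1}{2k}}(\log T)^{\frac14+o(1)}$ and tail $\ll T^{-\frac1k}(\log T)^{o(1)}$, your Step 3 computation gives $V^2T^{1/k}\gg V^{\frac{2k}{k-1}}(\log V)^{-\frac1{2(k-1)}-o(1)}$. This is the claimed exponent.
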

    
The more cautious analysis that leads to \cref{large-dev-part2} gives support to a conjecture that equates the Large Deviation Conjecture in \cite{Meng2017}, i.e., the right hand side gives the precise order of $\nu_k([V,\infty))$. Additionally, as a consequence of this and the analysis of Ng \cite[See][Section~4.3]{Ng2004}, we have the following:
    \begin{conjecture}\label{conjecture-chi}
        Let $k\geq 2$ be an integer, $\chi$ a real non-principal Dirichlet character modulo $q$ and $f=\mu^{(k)}\chi$ (resp. $\mu^{(k)}g_\chi$). Then, there exists a constant $C_k>0$, such that
            \begin{align*}
                \underline{\overline{\lim}}_{x\to\infty}\frac{\sum_{n\leq x}f(n)}{x^{\frac{1}{2k}}(\log\log x)^{\frac{1}{2}-\frac{1}{2k}}(\log\log\log x)^{\frac{1}{4k}}}=\pm C_k.
            \end{align*}
    \end{conjecture}

\subsection{\texorpdfstring{Main ideas of the proof of \Cref{large-dev-theorem}}{}}

To prove the first statement \eqref{large-dev-part1}, considering \eqref{rv-even}, we take $r_\gamma=2|\frac{L(\rho/k,\chi)P(\rho)}{\rho\zeta'(\rho)}|$ and our aim is to apply Montgomery's Lemma.

From \cref{gamma-factor-bound} and from \Cref{Lemma-lowerupper-bound-zeta} combined with Cauchy's Integral Formula, we have that
    \[
        \bigg|L\bigg(\frac{\rho}{k},\chi\bigg)\bigg|\gg |\gamma|^{\frac{1}{2}-\frac{1}{2k}-\varepsilon}\quad\text{ and }\quad\frac{1}{|\zeta'(\rho)|}\gg \frac{1}{|\gamma|^\varepsilon},
    \]
respectively.

The hypothesis of the theorem and the above estimates gives the upper and lower bounds:
    \[
        T^{\frac{1}{2}-\frac{1}{2k}-\varepsilon}\ll \sum_{0<\gamma\leq T}r_\gamma\ll T^{\frac{1}{2}-\frac{1}{2k}+\varepsilon}\quad\text{ and }\quad \frac{1}{T^{\frac{1}{k}+\varepsilon}}\ll\sum_{\gamma>T}r_\gamma^2\ll \frac{1}{T^{\frac{1}{k}-\varepsilon}}.
    \]

Now we apply \Cref{Montgomery-prob-lemma} to obtain
    \[
        \exp(-c_1T^{1+\varepsilon})\leq \nu_k\bigg(\bigg[2\sum_{0<\gamma \leq T}r_\gamma,\infty\bigg)\bigg)\leq \exp(-c_2T^{1-\varepsilon}),
    \]
for constants $c_1,c_2>0$ and conclude by taking $V=T^{\frac{1}{2}-\frac{1}{2k}}$.

To prove the second part of \Cref{large-dev-theorem}, namely \cref{large-dev-part2}, we need the following result from \cite{Meng2017}: under the Riemann hypothesis, for $w\in(0,1)$ and $r\geq 1$, we have
    \[
        \sum_{0<\gamma\leq T}|\zeta(1-w\rho)|^{2r}\ll T\log T,
    \]
and
    \[
        \sum_{0<\gamma\leq T}\frac{1}{|\zeta(1-w\rho)|^{2r}}\ll T\log T.
    \]

With the above at our disposal, we use Hölder's inequality and the conjectural bounds for the $2l$-th moments to estimate the sums over $r_\gamma$ and $r_\gamma^2$. By following Meng's steps, we are able to produce the extra logarithmic term in the denominator of \eqref{large-dev-part2}.

\section{Additional results}\label{section-additional-results}

This section is dedicated to present theorems that are somewhat similar to the Weak Mertens Hypothesis, which asserts that,
    \begin{align}\label{weak-mertens-conj}
        \int_1^X\bigg(\frac{\sum_{n\leq x}\mu(n)}{x}\bigg)^2\,dx\ll \log X.
    \end{align}

The above is a notably strong conjecture and, in particular, \cref{weak-mertens-conj} implies the Riemann hypothesis, that all the zeros of $\zeta$ are simple, the bound $\zeta'(\rho)^{-1}\ll |\rho|$ and the convergence of the sum $\sum_\gamma |\rho\zeta'(\rho)|^{-2}$ \cite[See][Section~14.29]{TitchmarshBookRZF}.

Our result states:
    \begin{theorem}\label{weak-mertens-analogue}
        Assume the generalized Riemann hypothesis and \Cref{first-hypothesis} or \Cref{second-hypothesis}. Let $\chi$ be a primitive real non-principal Dirichlet character of modulus $q$ and $k\geq 2$ a fixed integer. Then,
            \begin{align*}
                \int_2^X\bigg(\frac{\sum_{n\leq x} f(n)}{x^{\frac{1}{2k}}}\bigg)^2\,\frac{dx}{x}\ll \log X.
            \end{align*}
    \end{theorem}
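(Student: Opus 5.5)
We change variables $x=e^y$, so that the integral becomes $\int_{\log 2}^{\log X}\varphi(y)^2\,dy$ with $\varphi(y)=e^{-y/(2k)}\sum_{n\leq e^y}f(n)$. We then split $[\log 2,\log X]$ into unit intervals $[V,V+1]$ and show that $\int_V^{V+1}\varphi(y)^2\,dy\ll 1$ uniformly in $V$. Summing over the $O(\log X)$ intervals gives the claim. We present the case $k$ even with $f=\mu^{(k)}\chi$; the other cases differ only in the factors $P(\cdot)$, which are $\ll 1$ on $\Re(s)>0$ by \eqref{bound-for-P(s)}, and in replacing $\zeta'$ by $L'(\cdot,\chi)$.

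For $y\in[V,V+1]$ we apply \Cref{lemma-general-T} with $x=e^y$ and $T=e^{2(V+1)}$, so that $T\geq x^2$. Dividing by $e^{y/(2k)}$, and using $\varepsilon<\frac{1}{4k}$, every term of $\Tilde{E}(x,T)/x^{1/(2k)}$ is $O_\varepsilon(1)$: the term $x\log x/T\leq \log x/x$ is small, $x^{\varepsilon}T^{\varepsilon}\leq x^{3\varepsilon}$ is dominated by $x^{1/(2k)}$, and $(\log T)^{1/2}/T^{\varepsilon}\ll 1$. This is the same computation as in the proof of \Cref{cor-distribution}, only done pointwise. It remains to bound
\[
\int_V^{V+1}\bigg|\sum_{|\gamma|<T}\frac{L(\frac{\rho}{k},\chi)P(\rho)e^{iy\gamma/k}}{\rho\zeta'(\rho)}\bigg|^2\,dy,
\]
and by the reflection symmetry of the zeros it suffices to treat the sum over $0<\gamma<T$.

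We split this sum at $\gamma=1$. The sum over $0<\gamma\leq 1$ is empty, or in any case a finite sum of bounded terms. For $1<\gamma<T$ we apply \Cref{Ng2025} through \Cref{main-lemma-corollary}. The proof of \Cref{main-lemma} verifies \Cref{assumption-1,assumption-3} with $\theta<2$, and the resulting bound $\ll 1/T_0^{2-\theta-\varepsilon}$ is valid for any lower cutoff $T_0\geq 1$, uniformly in $V$ and in the upper cutoff $X=T$. Taking $T_0=1$ gives an $O(1)$ bound for the unit-interval mean square. Combining this with the bounded error term via $(a+b)^2\leq 2a^2+2b^2$ yields $\int_V^{V+1}\varphi^2\,dy\ll 1$, which finishes the argument.

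The main obstacle is uniformity. The bound from \Cref{Ng2025} must hold with a constant independent of both $V$ and the (growing) upper cutoff $X$, which is how the proposition is stated. We must also make sure that the choice $T=e^{2(V+1)}$ keeps the term $(\log T)^{1/2}T^{-\varepsilon}$ and the term $x^{\varepsilon}T^{\varepsilon}x^{-1/(2k)}$ simultaneously bounded; this requires $\varepsilon$ small relative to $1/k$, and that is exactly where the hypotheses (i)/(ii) with exponent $1+\frac1k-\varepsilon$ enter, through \Cref{lemma-general-T}.
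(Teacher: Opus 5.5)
Your proposal is correct and follows the paper's proof. Both arguments pass to $y=\log x$ and bound the mean square on each unit interval by \Cref{lemma-general-T} followed by \Cref{main-lemma-corollary} with a fixed lower cutoff, then sum over the $O(\log X)$ intervals; the paper takes $T=e^M\asymp x$ and cutoff $14$, you take $T\asymp x^2$ and cutoff $1$, which makes no real difference. One small slip: $T\le e^2x^2$ gives $x^{\varepsilon}T^{\varepsilon}\ll x^{3\varepsilon}$, so you need $\varepsilon<\frac{1}{6k}$ rather than $\varepsilon<\frac{1}{4k}$, which is harmless since $\varepsilon$ can be taken arbitrarily small.
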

    \begin{proof}            
        Let $k$ be even. Similar to the proof of \Cref{log-meas-zero-thm}, we let $x\in [X,eX]$ and apply \Cref{lemma-general-T} to obtain:
            \begin{align*}
                \bigg(\sum_{n\leq x}f(n)\bigg)^2\ll\bigg|\sum_{|\gamma|< X}\frac{L(\frac{\rho}{k},\chi)P(\rho)x^{\rho/k}}{\rho\zeta'(\rho)}\bigg|^2+O_\varepsilon(X^{\frac{1}{k}-2\varepsilon}(\log X)).
            \end{align*}

        If we take $X=e^M$, then $x\in[e^M,e^{M+1}]$. Thus, by \Cref{main-lemma-corollary},
            \begin{align*}
                \int_{e^M}^{e^{M+1}}&\frac{(\sum_{n\leq x}f(n))^2}{x^{1+\frac{1}{k}}}\,dx\\
                &\ll_\varepsilon \int_{e^M}^{e^{M+1}}\bigg|\sum_{14<|\gamma|< e^M}\frac{L(\frac{\rho}{k},\chi)P(\rho)x^{\rho/k}}{\rho\zeta'(\rho)}\bigg|^2\,\frac{dx}{x^{1+\frac{1}{k}}}+O(e^{-2M\varepsilon}M)\\
                &\ll_\varepsilon 1.
            \end{align*}

        Finally, by the above and summing over $M$ in the same fashion of \Cref{log-meas-zero-thm}, we have
            \begin{align*}
                \int_2^X\bigg(\frac{\sum_{n\leq x}f(n)}{x^{\frac{1}{2k}}}\bigg)^2\,\frac{dx}{x}\ll \sum_{M=2}^{[\log X]}\int_{e^M}^{e^{M+1}}\bigg(\frac{\sum_{n\leq x}f(n)}{x^{\frac{1}{2k}}}\bigg)^2\,\frac{dx}{x}\ll_\varepsilon \log X.
            \end{align*}
    \end{proof}

    \begin{remark}
        Using an argument similar to Theorem 14.29 (B) of \cite{TitchmarshBookRZF}, one could also show that, under the generalized Riemann hypothesis, the bound
            \[
                \int_2^X\bigg(\frac{\sum_{n\leq x} f(n)}{x^{\frac{1}{2k}}}\bigg)^2\,\frac{dx}{x}\ll \log X
            \]
        implies that
            \[
                \sum_{\gamma}\bigg|\frac{L(\frac{\rho}{k},\chi)P(\rho)}{\rho\zeta'(\rho)}\bigg|^2
            \]
        is convergent and a further consequence of the latter is $\sum_{0<\gamma\leq T}|\zeta'(\rho)|^{-2}\ll T^{1+\frac{1}{k}+\varepsilon}$.
    \end{remark}

As a consequence of the existence of a limiting distribution, by Theorem 1.14 of \citeauthor{Akbary-Ng-Shahabi2014} (also see \cite[][Theorem~3]{Ng2004} and \cite[][Theorem~2]{Meng2017}), we obtain a stronger result, namely:
    \begin{align*}
        \int_{2}^X\bigg(\frac{\sum_{n\leq x}f(n)}{x^{\frac{1}{2k}}}\bigg)^2\,\frac{dx}{x}\sim \beta_k \log X,
    \end{align*}
where, for $f=\mu^{(k)}\chi$,
    \begin{align*}
        \beta_k=
            \begin{dcases}
                2\sum_{\gamma>0}\bigg|\frac{L(\frac{\rho}{k},\chi)P(\rho)}{\rho\zeta'(\rho)}\bigg|^2   &\text{ if $k$ is even,}\\
                2\sum_{\gamma>0}\bigg|\frac{L(\frac{\rho}{k},\chi)}{\rho L'(\rho,\chi)}\bigg|^2   &\text{ if $k$ is odd.}
            \end{dcases}
    \end{align*}
As usual, if $f=\mu^{(k)}g_\chi$, we replace the factor $P(\rho)$ by $P(\rho/k)$ if $k$ is even and include $P(\rho/k)/P(\rho)$ if $k$ is odd.

At last, note that $\sum_{0<\gamma\leq T}|\zeta'(\rho)|^{-2}\ll T^{1+\frac{1}{k}-\varepsilon}$ (resp. $\sum_{0<\gamma\leq T}|L'(\rho,\chi)|^{-2}\ll T^{1+\frac{1}{k}-\varepsilon}$) can be used to show the convergence of $\beta_k$ above.

\section{Final remarks}

Throughout this manuscript we assumed that the second negative moment for $\zeta(s)$ and $L(s,\chi)$ has cancellation $\ll T^{1+\frac{1}{k}-\varepsilon}$ and, for \Cref{log-meas-zero-thm}, we obtained a cancellation of $\ll T^{1+\frac{1}{2k}-\frac{\varepsilon}{2}}(\log T)^{\frac{1}{2}}$ for the first negative moment, as a consequence of the former. However, for some summatory functions, e.g. partial sums of the Möbius function, as pointed out by \citeauthor{Meng2017} (also covered in the recent work of \citeauthor{ng2025primenumbererrorterms}), one can assume the much weaker bound $\sum_{0<\gamma\leq T}|\zeta'(\rho)|^{-2}\ll T^{2-\varepsilon}$.

Assuming the Riemann hypothesis and the simplicity of zeros, we note that we have $\sum_{0<\gamma\leq T}|\zeta'(\rho)|^{-2}\ll T^{2+\varepsilon}$, since $|\zeta'(\rho)|^{-1}\ll |\rho|$. Furthermore, the Weak Mertens Hypothesis \eqref{weak-mertens-conj} implies that
    \[
        \sum_{0<\gamma\leq T}\frac{1}{|\zeta'(\rho)|^{2r}}=
            \begin{cases}
                o(T^{r+1}(\log T)^{1-r})    &\text{ if $0<r<1$,}\\
                o(T^{2r})   &\text{ if $r\geq 1$.}
            \end{cases}
    \]
This result follows from Hölder's inequality and the fact that $\sum_{\gamma}|\rho\zeta'(\rho)|^{-2}$ is convergent under this conjecture. We refer to \citeauthor{Bui-Florea-Milinovich} for a proof of the above and, although the authors considered the sum where $\gamma\in (T,2T]$, note that the same proof is carried for sums over all zeros of $\zeta$ with ordinates in $(0,T]$.

Also in \cite{Bui-Florea-Milinovich}, assuming only the Riemann hypothesis, \citeauthor{Bui-Florea-Milinovich} showed that
    \begin{align}\label{Bui-Florea-Milinovich-result}
        \sum_{\gamma\in \mathcal{F}}\frac{1}{|\zeta'(\rho)|^{2r}}\ll_{r,\delta}
            \begin{cases}
                T^{1+\delta}    &\text{ if $r<\frac{1}{2}$,}\\
                T^{r+\frac{1}{2}+\delta}    &\text{ if $r\geq \frac{1}{2}$,}
            \end{cases}
    \end{align}
for any $\delta>0$, where
    \[
        \mathcal{F}=\bigg\{\gamma\in (T,2T] : |\gamma-\gamma'|\gg \frac{1}{\log T} \text{ and $\gamma'$ is any other ordinate of a zero of $\zeta$}\bigg\}.
    \]

As remarked by the authors, the set $\mathcal{F}$ is expected to be arbitrarily close to full density inside the dyadic interval $(T,2T]$. Thus, if such result could be extended to sums over all ordinates $\gamma\in (0,T]$, only the generalized Riemann hypothesis would be enough to prove our results, since \Cref{first-hypothesis,second-hypothesis} from \Cref{cor-distribution} would become obsolete.

In prior works related to limiting distribution of summatory functions, the strategy for establishing explicit formulas, as in \Cref{lemma-specific-T}, was to use Cauchy's Integral Formula and the Residue Theorem in a rectangle of vertices $c\pm iT$ and $-U\pm iT$, for some $c,U>0$, estimate the integrals over each line segment and take $U\to \infty$. Our case is more delicate due to the function $P(s)$. If we go beyond the imaginary axis, the contribution coming from the poles described in \cref{zeros-P} would be too large, as discussed in \Cref{subsection-discussion-P(s)}. This explains why we maintained our contour in the half-plane $\Re(s)>0$.

The usual approach described above works when we consider the case where $k$ is odd and we are working with the function $\mu^{(k)}\chi$, since the representation of its Dirichlet series does not contain the factor $P(s)$, as presented in \Cref{section-preliminaries}.

\section{Acknowledgements}

I would like to thank Marco Aymone for first pointing out the problem, his support and for reading an early version. I am also thankful for Professor Winston Heap for his interest on the topic and also suggesting the problem, and helpful discussions while I was visiting the Max Planck Institute for Mathematics. Moreover, I thank the MPIM for the warm hospitality. Finally, I would like to thank Professor Micah Milinovich for providing a reference that contributed to the improvement of the exposition.

\begin{Backmatter}

\bibliographystyle{apalike}
\bibliography{Sample-refs.bib}

\printaddress

\end{Backmatter}

\end{document}